\definecolor{darkgreen}{rgb}{0,0.51,0.11}
\newtheorem{theorem}{Theorem}[section]
\newtheorem{lemma}[theorem]{Lemma}
\newtheorem{proposition}[theorem]{Proposition}
\theoremstyle{definition}
\newtheorem{definition}[theorem]{Definition}
\newtheorem{example}[theorem]{Example}
\theoremstyle{remark}
\newtheorem{remark}[theorem]{Remark}
\numberwithin{equation}{section}
\begin{document}


\markboth{Pouya Mehdipour}{A $N$-to-1 Smale Horseshoe}

\title{An $N$-to-1 Smale Horseshoe}
\author{SANAZ LAMEI}
\address{Faculty of Mathematical Sciences\\ University of Guilan, Iran-Guilan \\lamei@guilan.ac.ir}
\author{POUYA MEHDIPOUR}
\address{Departamento de Matemática\\ Universidade Federal de Viçosa, Brasil-MG\\Pouya@ufv.br} 

\maketitle


\begin{abstract}
	In this work we extend the Conley-Moser Theorem for $N$-to-1 local diffeomorphisms. By the aim of some extended symbolic dynamics we encode generalized $N$-to-1 horseshoe maps and as a corollary their structural stability is verified.
\end{abstract}
\textbf{keywords:}\textit{ Smale horseshoe, non-invertible dynamics, zip shift maps, structural stability}

\subjclass{Primary: 37Dxx. Secondary: 37D20, 37D45, 37D05 37B10.}

\section{Introduction}

In 1970, Stephen Smale constructed a brilliant example that has become fundamental in the study of dynamical systems \cite{Smale:1976}. 
The so called \textit{Smale Horseshoe}, which characterizes a class of hyperbolic chaotic diffeomorphisms, is known as the hallmark of deterministic chaos. 
In 1969, M. Shub performed a comprehensive study on endomorphisms of compact differentiable manifolds and presented, mainly the structural stability of 
expanding maps \cite{Shub:1969}. In this direction, one can find other works including \cite{Mane and Pugh:1975}, 
\cite{Prezytycki:1977},\cite{Quandt:1988},\cite{Ikegami I:1989}, \cite{Ikegami II:1990}.

In this article, we present the construction and coding of the $N$-to-1 Smale horseshoe map, as an extended version of the well-known  $1$-to-1 Smale horseshoe.
The method we use here to encode the $N$-to-1 horseshoe map is based on an extended version of the bilateral shift over finite alphabets. This extension of shift map 
  is called "zip shift" and is a local homeomorphism. More precisely, we show 
 the topological conjugation of such horseshoe map with an $N$-to-1 zip shift map. The presence of a strong transversality regardless of the choice of the orbits, 
 together with the density of hyperbolic periodic  points,  which is observable in the construction, seems pleasant. It is well known that these two conditions are equivalent to  the structural stability for 
 diffeomorphisms in the $C^1$-topology \cite{Robinson:1976}. It is noteworthy that the zip shift method is promising to perform the $N$-to-1 structural stability 
 of horseshoe map, compatible with its 1-to-1 version for which, the structural stability is well known.  (Section \ref{sec:5}).

The relevance of this construction is based on the natural and intrinsic topological conjugacy with some zip shift map. 
Substantially, without  a zip shift map,  an $N$-to-1 Smale horseshoe map (Section \ref{Sec:2}), is semi-conjugate to a one-sided shift over $2N$ symbols or 
with its inverse limit map. However, a topological conjugacy reveals its real dynamics and highlights some unknown properties.  

In \cite{Lamei and Mehdipour:2021II}, the authors studied the zip shift space from the point of view of symbolic dynamics and coding theory. 
In this same work,  the orbit structure of $N$-to-1 horseshoe maps such as periodic,  homoclinic and heteroclinic orbits are studied. In particular, this information led us to 
several relevant differences that are not presented through the inverse limit studies \cite{Przytycki:1976, Berger and Rovela:2013}. 
This essential difference between the inverse limit space data and the zip shift space data requires careful studies on both spaces. 
We hope that the zip shift method \cite{Lamei and Mehdipour:2021II} can shed a new light on the studies related to the endomorphisms, 
from the point of view of the Smale hyperbolic theory as well as, some of their classification problems.  

In what follows, in Section \ref{Sec:2}, we illustrate the example of an $N$-to-1 horseshoe. In Section \ref{sec:3}, we bring the definition of the local homeomorphism zip shift map 
and the zip shift space defined over two sets of alphabets. Furthermore, we show that zip shift maps exhibit Devaney's chaos. 
In Section \ref{sec:4}, we adapt the Conley-Moser condition for a two-dimensional $N$-to-1 map. The main theorem of Section \ref{sec:4} provides sufficient conditions for the
 existence of an invariant Cantor set for an $N$-to-1 local diffeomorphism. The main references of Section \ref{sec:4} are \cite{Wiggins:1990} and \cite{Moser and Holmes:1973}.
  We enclose the paper verifying the structural stability properties of the $N$-to-1 horseshoe map in Section \ref{sec:5}.

\section{An $N$-to-1 Smale Horseshoe}\label{Sec:2}
In this section we extend the construction of the $1$-to-1 Smale horseshoe to an $N$-to-1 version. 
The interested reader can find more details about the orbit structure of an $N$-to-1 horseshoe in \cite{Lamei and Mehdipour:2021II}.
\begin{definition}[\textbf{An $N$-to-1 local homeomorphism}]\label{Def:p-t-1}
	Let $X$ be a compact metric space and $\{X_i\}_{i=1}^{N}$ be a disjoint collection  of connected subsets of $X$, where the union is contained in or possibly equals  $X$. 
Then $f:X\to X$ is said to be an $N$-to-1 local homeomorphism, when exists local dynamics $f_i: X_i\to  f(X_i),$  which is a homeomorphism for all $i=1,\cdots, N$. 
If the maps $f_i$ are $C^r$-diffeomorphisms, then $f$ is called an  $N$-to-1 $C^r$ local diffeomorphism. 
\end{definition}

\begin{example}\label{ex:prin}
	
Let $D\subset \mathbb{R}^2$ be a closed disk and $Q\subset D$ be the unit square with $f:D\to D$ an $N$-to-1 local homeomorphism. 
Assume that there exist $N$ rectangles $Q_{i}, i=0,\cdots, N-1$ such that $Q=\cup_{i=1}^{N}\, Q_i$ and $f_{|_{{Q_{i}}}}:Q_{i}\to f(Q_i)$ is a diffeomorphism.

Here, we describe the construction of an $N$-to-1 Smale Horseshoe. Let $A: \mathbb{R}^2\to \mathbb{R}^2$ be the linear map,
\[A=\left[
\begin{matrix}
\alpha & 0 \\
0 &  \beta
\end{matrix}
\right]\]
where $\alpha= 2N+\epsilon$ for an arbitrary small $\epsilon >0$ and $\beta=1/{\alpha}$. For simplicity, take $N=2$. The region $A(Q)$ is a rectangle of size $\alpha \times \beta$ 
as in Figure \ref{fig:1}.
Let $x=M$ be a line passing through the middle of $A(Q)$. Fold $A(Q)$ from line $x=M$ or take modulus by $\alpha/2$. Denote this action by $F$. Now, 
$F\circ A(Q)$ is a rectangle of size $\alpha/2 \times \beta$.
Bend  this rectangle 
 back  into $Q$ to perform the horseshoe shape  as shown in Figure \ref{fig:1}.
Define the horseshoe map by $f=B\circ F\circ A$ where $B$ stands for bending.
The intersection $f(Q)\cap Q$ gives two horizontal rectangles $H_a$ and $H_b$. The second iteration of $f$ is also illustrated in Figure \ref{fig:1}.


\begin{figure}[h]
	\begin{center}
	\includegraphics[scale=0.7]{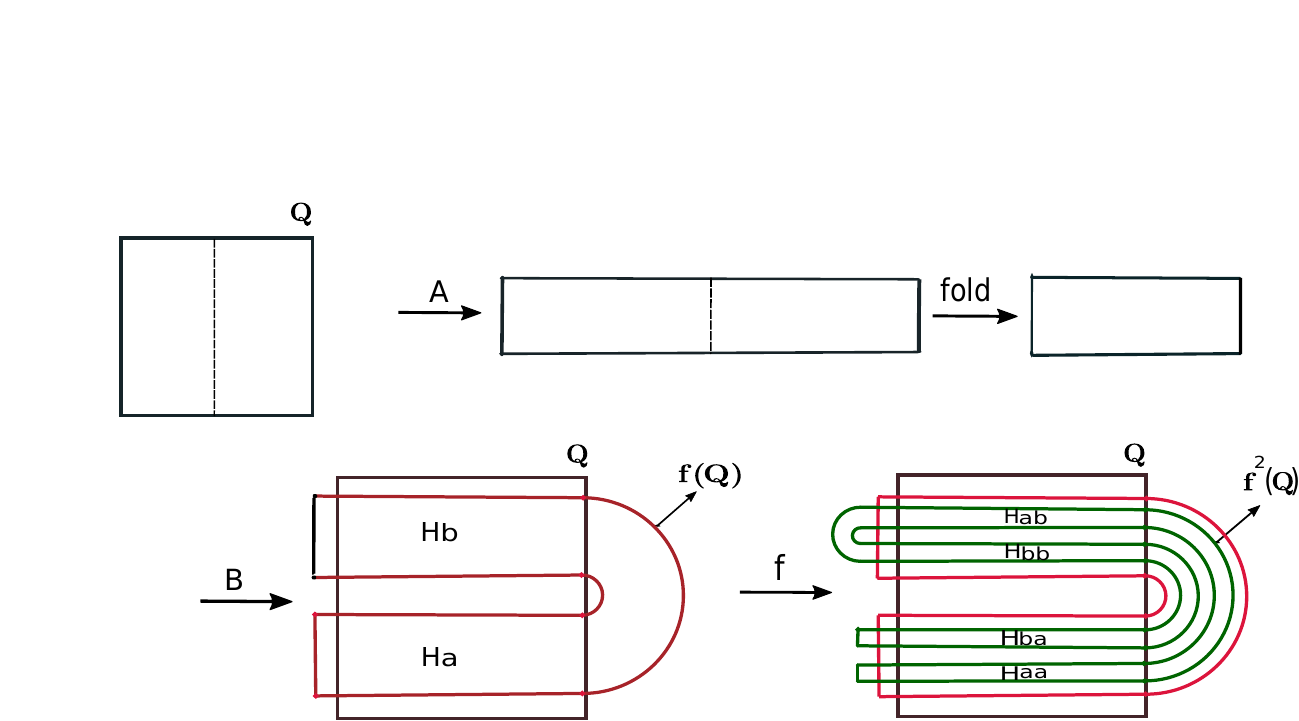}
	\end{center}
	\caption{\it First and second iteration of  a 2-to-1 horseshoe map $f$ on $Q$}
	\label{fig:1}
\end{figure}

Since $f$ is a 2-to-1 map, $f(Q)$ contains two "copies" of the horseshoe shape  and $f^{-1}(Q)$  "separates" those  copies.
It means that $f^{-1}(Q)$ is basically two vertical horseshoe shapes. Label the four vertical rectangles in $f^{-1}(Q)\cap Q$ by 
$V^1$, $V^{1^{'}}$, $V^{2}$ and $V^{2^{'}}$  as in Figure \ref{fig:2}. Note that $f(V^1)=f(V^{1^{'}})=H_a$ and $f(V^2)=f(V^{2^{'}})=H_b$.
The intersection of all forward and backward iterations of $f$ on $Q$ gives a Cantor set $\Lambda$, where any point in $\Lambda$
remains in $\Lambda$ for all  forward and backward iterations of $f$.

Comparing the 1-to-1 Smale horseshoe with the $N$-to-1 horseshoe, there are some differences as well as
 interesting topological and dynamical similarities. For example, they are different in topological entropy or in the number of periodic points of period $ k$. 
 It can be seen that the topological entropy of an $N$-to-1 horseshoe map is equal to $ \log \,2N $ and the number of periodic points with period $ k $ is $ (2N)^k $. 
 On the other hand, 
 both systems are topologically transitive and mixing. Both contain an infinite number of periodic orbits of arbitrary periods and an infinite number of non-periodic orbits (see Sections \ref{sec:3} and \ref{sec:4}).
\begin{figure}[h]
	\begin{center}
	\includegraphics[scale=0.5]{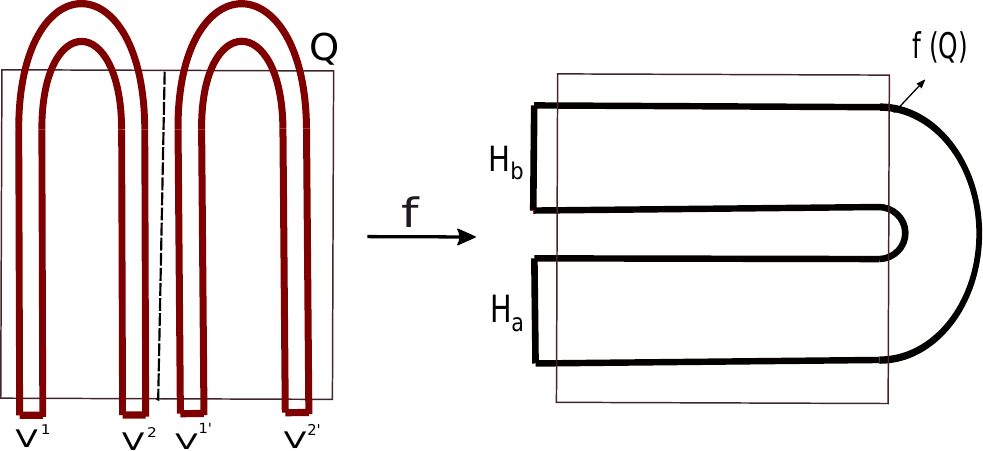}
    \end{center}
	\caption{\it First pre-image of the $2$-to-1 horseshoe map $f$ on $Q$}
	\label{fig:2}
\end{figure}

\end{example}

\section{Full zip shift Space}\label{sec:3}
In this section we describe  an extension of the  two-sided shift map called zip shift map \cite{Lamei and Mehdipour:2021II}. Consider two sets of alphabets $S$ and $Z$ which are 
related by a surjective map  $\tau:S\to Z$. 
Assume that $\#Z\leq \#S$. 
Define $\Sigma_{S}:=\prod_{i=-\infty}^{+\infty}S_i,$ where $S_i=S$. Consider the two-sided shift $\sigma:\Sigma_{S}\to \Sigma_{S}$. 
Then to any point $ t=(t_i)\in \Sigma_{S}$ correspond a point $ x=(x_i)_{i\in \mathbb{Z}}$, such that
\begin{equation}\label{Z}
	x_i=\begin{cases}
		t_i\in S  &\forall i\geq 0\\
		\tau(t_{i})\in Z &\forall i<0. 
	\end{cases}
\end{equation}
Define 
$\Sigma=\Sigma_{Z,S}:= \{x=(x_i)_{i\in \mathbb{Z}}\ : x_i \textrm{\ satisfies} \,(\ref{Z})\}.$
For $x,y\in \Sigma$, let  $M:\Sigma\times \Sigma\rightarrow\mathbb{N}\cup \{0\}$ be given by $M(x,y)=\min\{|i|\,:\,\,x_i\neq y_i \}. $ Here we use $\{x_{i}\}\neq \{y_{i}\}$ instead of $x_i\neq y_i$ to ensure definition between two sets of pre-images of points as well.
Then  $d(x,y)=\frac{1}{2^{M(x,y)}}$ defines a metric on $\Sigma$ and $(\Sigma, d)$ induce a topology $\mathcal{T}_d$ on $\Sigma.$ 
For $i,n\in \mathbb{Z}$ and $\ell\in \mathbb{N}\cup \{0\}$, one can define the cylinder sets $C_{i}^{\ell}$ as follows 
\begin{equation}
C_{i}^{\ell}=[s_i\,\cdots\, s_{i+\ell}]=\{x= (x_n)\in \Sigma:x_i=s_i,\cdots,x_{i+\ell}=s_{i+\ell}\},
\end{equation}
where $s_j\in S$  for $i\geq 0$, and $s_j\in Z$, for $i<0$, $i\leq j\leq i+\ell$.
The set of all cylinder sets, form a basis for the topological space $(\Sigma, \mathcal{T}_d)$.
It is not difficult to verify that the metric space $(\Sigma,d)$ is compact, totally disconnected and perfect. Indeed it is a Cantor set.   
The following known Lemma is easy to verify \cite{Wiggins:1990}.
\begin{lemma}\label{dis-equal}
	For $ s\,,t \in \Sigma,$
	\begin{itemize}
		\item suppose that $d(s\,, t)<1/(2^{M+1})$. Then $s_i=t_i$ for all $|i|<M.$
		\item suppose that $s_i=t_i$ for $|i|\leq M.$ Then $d(s\,, t)\leq 1/(2^M).$
	\end{itemize}
\end{lemma}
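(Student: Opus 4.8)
The plan is to prove both bullets by directly unwinding the definitions of $M(s,t)$ and $d(s,t)=2^{-M(s,t)}$, using only the elementary fact that $r\mapsto 2^{-r}$ is strictly decreasing on $\mathbb{N}\cup\{0\}$, together with the convention $2^{-\infty}=0$ (so that $d(s,s)=0$ corresponds to $s$ and $t$ agreeing in every coordinate, i.e. $M(s,s)=\min\emptyset=+\infty$).

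For the first bullet, I would assume $d(s,t)<1/2^{M+1}$. If $s=t$ the conclusion is immediate, so assume $s\neq t$; then $M(s,t)$ is a well-defined nonnegative integer and $2^{-M(s,t)}<2^{-(M+1)}$ forces $M(s,t)>M+1$, that is $M(s,t)\geq M+2$. By the very definition of $M(s,t)$ as the least $|i|$ at which the coordinates (resp. the coordinate pre-image sets) of $s$ and $t$ differ, every index $i$ with $|i|<M(s,t)$ satisfies $s_i=t_i$; in particular this holds for all $|i|\leq M+1$, hence a fortiori for all $|i|<M$, which is what is claimed.

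For the second bullet, I would assume $s_i=t_i$ for every $|i|\leq M$. If $s=t$ then $d(s,t)=0\leq 1/2^M$ and we are done; otherwise $M(s,t)$ is finite, and since the two sequences agree at all indices of modulus at most $M$, the first disagreement must occur at an index of modulus at least $M+1$, so $M(s,t)\geq M+1$. Monotonicity of $2^{-r}$ then yields $d(s,t)=2^{-M(s,t)}\leq 2^{-(M+1)}\leq 2^{-M}$.

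The argument is entirely routine and there is no real obstacle; the only points requiring a word of care are the degenerate case $s=t$, where $M(s,t)$ must be read as $+\infty$ so that $d(s,s)=0$ is consistent with the stated inequalities, and the convention already flagged in the text that one compares the pre-image sets $\{x_i\}\neq\{y_i\}$ rather than individual letters $x_i\neq y_i$ when defining $M$, a distinction that plays no role in any of the monotonicity steps above.
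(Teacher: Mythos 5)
Your proof is correct. The paper itself offers no proof of this lemma (it is stated as ``easy to verify'' with a reference to Wiggins), and your argument --- unwinding the definitions $M(s,t)=\min\{|i|:s_i\neq t_i\}$ and $d(s,t)=2^{-M(s,t)}$, using monotonicity of $r\mapsto 2^{-r}$ and handling the degenerate case $s=t$ via the convention $M(s,s)=+\infty$ --- is exactly the standard verification the authors are deferring to.
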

The two-sided full zip shift map is defined as
\begin{definition}[\textbf{Full zip shift map}]\label{zip shift}
	Let $(\Sigma,d)$ and $\tau:S\to Z$ be as above. Then
	\begin{align}\label{sigmatau}
	\sigma_{\tau}: & \Sigma \longrightarrow \Sigma,\\ 
	\nonumber& (x_n)\longmapsto (x_{n+1})=(\cdots x_{-k}\cdots x_{-1}\,\tau(s_0)\,\textbf{.}\,x_1\cdots x_k\cdots),
	\end{align}
	is called the \textit{full zip shift map}.
\end{definition}

It is obvious that $\sigma_{\tau}(\Sigma)=\Sigma$. Using the fact that the set of all cylinder sets form a basis for the topological space $\Sigma,$ it can be shown that $\sigma_{\tau}$ is a local homeomorphism. Call $(\Sigma, \sigma_{\tau})$  the \textit{full zip shift space}.\\
\textbf{Notation}.  For the rest of paper, we drop the word "full" for simplicity. 
\begin{example}\label{ex:2}
	Consider the 2-to-1 horseshoe map represented in Figures \ref{fig:1} and \ref{fig:2}. 
To correspond a  zip shift space to $\Lambda$, set $S=\{V^{1},V^{1^{'}},V^{2},V^{2^{'}}\}$ and $Z=\{a,b \}$. The horseshoe map $f$ induces
a surjective map $\tau:S\to Z$ with $\tau(1)=\tau(1^{'})=a$ and $\tau(2)=\tau(2^{'})=b$.
Define $$\Sigma=\Sigma_{Z,S}=\{t|\, t=(\cdots t_{-2} t_{-1} \textbf{.} t_0 t_1 \cdots), \, t_i\in S,\forall i\geq 0,\ \textrm{and}\ t_i\in Z,\, \forall i<0\}.$$
For instance take $ t=(\dots a\,b\,a\,b\,b\,.\,1\,0\,1^{'}\,1\,1\,\dots)$. 
Then $$\sigma_{\tau}((\dots\,a\,b\,a\,b\,b\,.\,1\,2\,1^{'}\,1\,2^{'}\,\dots))=(\dots\,a\,b\,a\,b\,b\,a\,.\,2\,1^{'}\,1\,2^{'}\,\dots).$$ 
There exists a homeomorphism $\phi: \Lambda\to \Sigma$ which is a  topological conjugacy between the horseshoe map $f$ and the zip shift map $\sigma_{\tau}$
(see Subsection \ref{subsec:3.1}).
\end{example}
It is worth mentioning that for an $N$-to-1 Smale horseshoe, one can take, $$S=\{1,2,\dots,N, 1',2',\dots,N'\}\,\,\,\,\, \text{and}\,\,\,\,\,\, Z=\{a,b\},$$ 
where $\tau(1)=\dots=\tau(N)=a$ and $\tau (1^{'})=\dots=\tau(N^{'})=b$.
Besides, in case $S=Z$ and $\tau(s_i)=\textrm{Id}$, one obtains the  two-sided shift which is conjugate to a 1-to-1 
Smale horseshoe map (more details in \cite{Lamei and Mehdipour:2021II}).

\begin{example}\label{ex:1}
Let $f(x)=2x \ \textrm{mod}\, 1$.  According to  Figure \ref{fig:4}, $S=\{0,1\}$ and $Z=\{a\}$. The map $\tau$ is a constant map defined as, $\tau(0)=\tau(1)=a$. 
Any element of the zip shift space $\Sigma_{Z,S}$ have the form $s=(\dots aaa\,\textbf{.}\,s_0s_1s_2\dots)$, where $s_i\in S$. 
Then $\sigma_{\tau}( s)=(\dots aaaa\,\textbf{.}\,s_1s_2\dots)$. In general,  any full one-sided shift on $N$ alphabets, is conjugate to a zip shift map, 
where the surjective map $\tau$ is the constant map $\tau(s_i)=a$.
\end{example}
\begin{figure}[h]
	\centering
	\includegraphics[scale=0.5]{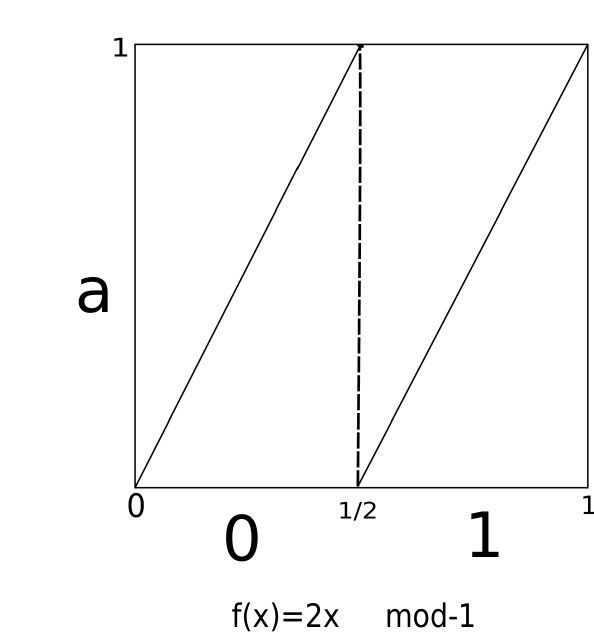}
	\caption{\it $Z=\{a\}$ and $S=\{0,1\}$.} 
	\label{fig:4}
\end{figure} 
	
\subsection{Zip Shift Maps are Chaotic}
Topological transitivity, density of periodic points and sensitivity to the initial conditions are properties that characterize the Chaos (Devaney's Definition). 
In what follows we distinguish such properties for zip shift maps. 

\begin{definition}[\textbf{Expansivity}]
	Let $(X,d)$ be a compact metric space and $f:X\to X$ a continuous dynamical system (local homeomorphism). We say that $f$ is an \textit{Expansive} map with expansivity constant $e>0$ if for any $x,y\in X$ exists some $n\in\mathbb{Z}$ such that $d(f^{n}(x),f^{n}(y))>e$. In case of $n<0$ we consider minimum distance of the sets.
\end{definition}

\begin{definition}[\textbf{Periodic and pre-periodic points}]
	Let $(\Sigma, \sigma_{\tau})$ represent a zip shift space defined on two alphabet sets $S,Z$ and with some $\tau:S\to Z$. A  \textit{periodic} point of period $n,$ has the 
form $${p}=(\overline{\tau(p_0)\,\dots\,\tau(p_{{n-1}})}\,\textbf{.}\,\overline{p_0\,p_1\,\dots\,p_{n-1}})$$ where the overline means the repetition. 
For simplicity we may represent it as ${p}=(\overline{p_0\,p_1\,\dots\,p_{n-1}}).$ 

A point $q$ is called a \textit{pre-periodic}  of ${p}$, if there exists some $k>0$ that $f^k(q)=p$.
\end{definition}

\begin{remark}
Note that when $\tau$ is not an injective map,  $\tau^{-1}(\tau (p_{t}))$ can be any choice of the $N$ pre-images of $\tau (p_{t})$ as shown in the following example.
\end{remark}
\begin{example}\label{ex:3}
	In Example \ref{ex:2} the points $p_1=(\overline{ab}\textbf{.}\overline{12})$ and $p_2=(\overline{ab}\textbf{.}\overline{1^{'}2^{'}})$ are periodic points of period 2 and points $q_1=(\overline{ab}\textbf{.}1^{'}2^{'}\overline{12})$ and $q_2=(\overline{ba}\textbf{.}2^{'}1^{'}2\overline{1^{'}2^{'}})$ are pre-periodic points correspondingly associated to $p_1$ and $p_2$. 
\end{example}

\begin{theorem}\label{thm:1}
	The periodic points of a zip shift space $(\Sigma, \sigma_{\tau})$ are dense in $\Sigma.$
\end{theorem}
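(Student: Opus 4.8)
The plan is to mimic the classical argument that periodic points of the two-sided shift are dense, adapting it to the zip shift where the coordinates with negative index live in $Z$ rather than in $S$. Fix an arbitrary point $x = (x_i)_{i\in\mathbb Z}\in\Sigma$ and an $\varepsilon>0$; I want to produce a periodic point $p$ with $d(x,p)<\varepsilon$. Choose $M\in\mathbb N$ large enough that $1/2^{M}<\varepsilon$ (using the second bullet of Lemma \ref{dis-equal}, it suffices to match $x$ and $p$ on all coordinates with $|i|\le M$). Since $x\in\Sigma$, its nonnegative coordinates $x_0,\dots,x_M$ lie in $S$ and its negative coordinates $x_{-1},\dots,x_{-M}$ lie in $Z$. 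The subtlety relative to the ordinary shift is that a periodic point's negative-index block is forced to be $\tau$ applied to its nonnegative-index block, so I cannot simply repeat the word $x_{-M}\cdots x_M$ verbatim.

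The key step is to build the periodic point from a single word over $S$ of length $2M+1$ and then check that its $Z$-part agrees with $x$ on the required coordinates. Because $\tau:S\to Z$ is surjective, for each $j$ with $1\le j\le M$ pick $u_{-j}\in S$ with $\tau(u_{-j})=x_{-j}$, and for $0\le j\le M$ set $u_j=x_j\in S$. Form the block $w=(u_{-M}\,\cdots\,u_{-1}\,u_0\,\cdots\,u_M)\in S^{2M+1}$ and let $p$ be the periodic point of period $2M+1$ whose nonnegative coordinates are the periodic repetition of $w$ starting so that $p_0=u_0=x_0$, namely $p=(\overline{u_{-M}\cdots u_{M}})$ in the notation introduced above, with $p_i=u_{-j}$ for the appropriate residue. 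By the definition of a periodic point in a zip shift space, the negative coordinates of $p$ are $\tau$ of the corresponding nonnegative coordinates; in particular for $1\le j\le M$ we get $p_{-j}=\tau(p_{m})$ for the matching index $m\in\{0,\dots,2M\}$, and by construction this equals $\tau(u_{-j})=x_{-j}$. Meanwhile $p_j=u_j=x_j$ for $0\le j\le M$. Hence $p_i=x_i$ for all $|i|\le M$, so $d(x,p)\le 1/2^{M}<\varepsilon$ by Lemma \ref{dis-equal}.

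Finally, $p$ is genuinely a periodic point of $\sigma_{\tau}$: applying $\sigma_{\tau}$ shifts coordinates left and replaces the entering coordinate by $\tau$ of the old $0$-th symbol, which is exactly the structure already built into $p$, so $\sigma_{\tau}^{2M+1}(p)=p$. Since $x$ and $\varepsilon$ were arbitrary, the periodic points are dense in $\Sigma$. The main obstacle — really the only place where this differs from the textbook proof — is the bookkeeping in the previous paragraph: one must be careful that the negative block of the periodic word, which is not free but slaved to the positive block via $\tau$, still matches the prescribed negative coordinates of $x$; surjectivity of $\tau$ is precisely what makes the necessary preimage choices possible, and choosing the period to be $2M+1$ (rather than something smaller) guarantees the positive block contains enough letters to accommodate all the constraints simultaneously without collision.
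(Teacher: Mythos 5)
Your proof is correct and uses essentially the same mechanism as the paper's: surjectivity of $\tau$ lets you lift the negative-index $Z$-symbols to letters of $S$, and concatenating these lifts with the nonnegative block yields a periodic word whose induced negative tail matches the target. The only cosmetic difference is that you work with a symmetric window $|i|\le M$ via the metric and Lemma \ref{dis-equal}, which collapses the paper's three-case cylinder-set analysis into the single ``straddling'' case (the paper's third bullet), where the same bookkeeping appears.
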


\begin{proof}
	Let $C_{i}^{\ell}=[s_i\,\dots\, s_{i+\ell}]\subset (\Sigma,d)$.  There are three different possibilities as follows. In each case we find a periodic point $p$ in  $C_{i}^{\ell}$.
	\begin{itemize}
		\item If $i\geq 0$, 
	take ${p}=(\overline{\tau(p_{0})\,\cdots\,\tau(s_{i+\ell})}\,\textbf{.}\,\overline{p_0\,\dots\,p_i\,\dots\,p_{i+\ell}})=(\overline{p_0\,\dots\,s_i\,\dots\,s_{i+\ell}})$
	which is  a periodic point that belongs to $C_{i}^{\ell}$.
		\item If $i+\ell< 0$,
	take ${p}=(\overline{p_i\,\dots\,p_{i+\ell}\,\dots\,p_{-1}}\,\textbf{.}\,\overline{c_0\,\dots\,c_{-(i+1)}})=(\overline{c_0\,\dots\,c_{-(i+1)}})\in C_{i}^{\ell}$ 
	where $p_i=s_i$,$\dots$,$p_{i+\ell}=s_{i+\ell}$ and $c_0\in \tau^{-1}(p_i)$, $\dots$, $c_{-(i+1)}\in \tau^{-1}(p_{-1})$. 
	Since $\tau$ is an onto map, one can find more than one periodic point depending on the choices of $c_j\in\tau^{-1}(p_{j+i})$ for $0\leq j\leq -(i+1)$. 
		\item If $i<0$ and $i+\ell>0$, then take
		$$p=(\overline{\tau(p_0)\dots\tau(p_{i+\ell})\, p_{i}\,\dots\,p_{-1}}\,\textbf{.}\,\overline{p_0\,\dots\,p_{i+\ell}\,c_{i+\ell +1}\,\dots\,c_{\ell+1 }}),$$ 
	simplified to ${p}=(\overline{p_0\,\dots\,p_{i+\ell}\,c_{i+\ell +1}\,\dots\,c_{\ell +1}})$
		belongs to $C_{i}^{\ell}$. Here $p_i=s_i$,$\dots$, $p_{i+\ell}=s_{i+\ell}$ and $c_{i+\ell +1}\in \tau^{-1}(p_i)$, $\dots$, $c_{\ell +1}\in \tau^{-1}(p_{-1})$. 
	\end{itemize}
	Therefore, the periodic points are dense in the zip shift space $(\Sigma, \sigma_{\tau})$.
\end{proof}
\begin{remark}
Denote the sets of periodic points and pre-periodic points of $\sigma_{\tau}$ by  $\textrm{Per}(\sigma_{\tau})$ and  $\textrm{PPer}(\sigma_{\tau})$respectively. Then $\textrm{Per}(\sigma_{\tau})\subseteq \textrm{PPer}(\sigma_{\tau})$. Therefore, the set of pre-periodic points of a zip shift map is also dense in $\Sigma$.
\end{remark}
	As it is known, the expansiveness implies the sensibility to initial conditions. In what follows we show that zip shift maps are expansive maps. 
\begin{proposition}\label{prop:1}
	The zip shift maps are expansive local homeomorphisms.
\end{proposition}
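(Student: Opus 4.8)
The assertion that $\sigma_\tau$ is a local homeomorphism was already recorded immediately after Definition~\ref{zip shift}, so the only thing left to prove is expansivity, and the plan is to check that $e=\tfrac{1}{2}$ (indeed any constant $e<1$) works. Fix two distinct points $x,y\in\Sigma$ and set $m=M(x,y)$; by definition $x_i=y_i$ whenever $|i|<m$, while $x_i\neq y_i$ for $i=m$ or for $i=-m$. If $m=0$ then $x_0\neq y_0$, so $d(x,y)=1>e$ and we take $n=0$. If $m\geq1$ and $x_m\neq y_m$ (the ``forward'' case), then iterating \eqref{sigmatau} gives $(\sigma_\tau^{m}(x))_0=x_m$ and $(\sigma_\tau^{m}(y))_0=y_m$, since $\sigma_\tau$ acts as the ordinary left shift on the nonnegative coordinates; the zeroth coordinates now differ, so $M(\sigma_\tau^{m}(x),\sigma_\tau^{m}(y))=0$ and $d(\sigma_\tau^{m}(x),\sigma_\tau^{m}(y))=1>e$ by Lemma~\ref{dis-equal}.

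It remains to treat the ``backward'' case $m\geq1$, $x_{-m}\neq y_{-m}$, where one must argue with the preimage sets $\sigma_\tau^{-m}(x)$ and $\sigma_\tau^{-m}(y)$. Here I would first prove, by a short induction on $k$ using the formula \eqref{sigmatau}, that every $w\in\sigma_\tau^{-k}(x)$ satisfies $w_j=x_{j-k}$ for all $j\leq-1$ and $w_0\in\tau^{-1}(x_{-k})$; heuristically, each backward step ``unzips'' the symbol currently in position $-1$, so after $k$ steps the symbol $x_{-k}$ has been carried to position $0$ and replaced there by an arbitrary element of its $\tau$-fibre. Taking $k=m$, every preimage of $x$ has zeroth coordinate in $\tau^{-1}(x_{-m})$ and every preimage of $y$ has zeroth coordinate in $\tau^{-1}(y_{-m})$. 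Since $\tau$ is onto these fibres are nonempty, and since $\tau$ is a map with $x_{-m}\neq y_{-m}$ they are disjoint; hence no preimage of $x$ agrees at coordinate $0$ with any preimage of $y$, which forces $d(\sigma_\tau^{-m}(x),\sigma_\tau^{-m}(y))=1>e$ under either reading of the distance between preimage sets (the minimum of the pairwise distances, or the set-valued interpretation of $M$ via $\{\cdot\}\neq\{\cdot\}$). In all cases we have produced $n\in\mathbb{Z}$ with $d(\sigma_\tau^{n}(x),\sigma_\tau^{n}(y))>e$, so $\sigma_\tau$ is expansive.

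The only step carrying real content is the backward case: one has to make the iterated preimage $\sigma_\tau^{-k}$ of a point into a concrete (finite) set, run the induction tracking the discrepancy coordinate through the successive unzippings, and check that disjointness of the $\tau$-fibres at position $0$ really does yield distance $1$ once the set-distance convention is unwound. The forward case and the admissibility of $e=\tfrac{1}{2}$ are immediate from Lemma~\ref{dis-equal} and the definition of $\sigma_\tau$, and the local-homeomorphism property has already been established.
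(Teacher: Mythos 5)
Your proof is correct and follows the same idea as the paper's: exhibit an iterate (forward if the first disagreement occurs at a nonnegative index, backward otherwise) at which the zeroth coordinates must differ, forcing distance $1>1/2$. The paper's own proof is essentially a one-line assertion of exactly this fact; your case analysis --- in particular the induction showing that every $m$-fold preimage of $x$ carries a symbol of $\tau^{-1}(x_{-m})$ at position $0$, so that disjointness of the $\tau$-fibres yields set-distance $1$ under the minimum convention --- simply supplies the details the paper omits.
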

\begin{proof}
	Let $(\Sigma,\sigma_{\tau})$ represent a zip shift map \eqref{zip shift}. Then  $\sigma_{\tau}$ is expansive with expansivity constant $e=1/2$. In fact for any $x\neq y\in \Sigma$ there exists some $i\in\mathbb{Z}$ such that $d(\{\sigma_{\tau}^i(x)\},\{\sigma_{\tau}^i(y)\})=1>1/2$.  
\end{proof}

\begin{proposition}\label{prop:2}
Let $X$ be a compact metric space and $f:X\to X$ a local homeomorphism that is topologically conjugated with a zip shift map $\sigma_{\tau}:\Sigma\to \Sigma$. If $\phi: X\to \Sigma$ denotes the conjugacy map, then $f=\phi^{-1}\circ \sigma_{\tau}\circ \phi:X\to X$ is expansive.
\end{proposition}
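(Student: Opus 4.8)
The plan is to show that expansivity passes through the conjugacy $\phi$ by the classical compactness argument, the only genuinely new ingredient being the bookkeeping forced by the non-invertibility of $f$. By Proposition \ref{prop:1}, $\sigma_{\tau}$ is expansive with expansivity constant $e=1/2$. Since $\phi$ intertwines $f$ and $\sigma_{\tau}$ we have $\phi\circ f=\sigma_{\tau}\circ\phi$, hence $\phi\circ f^{n}=\sigma_{\tau}^{n}\circ\phi$ for all $n\ge 0$; moreover, being a homeomorphism that conjugates the two local homeomorphisms, $\phi$ carries the preimage set $f^{-1}(z)$ onto $\sigma_{\tau}^{-1}(\phi(z))$ for every $z\in X$, so in fact $\phi\big(f^{n}(x)\big)=\sigma_{\tau}^{n}(\phi(x))$ for every $n\in\mathbb{Z}$, where for $n<0$ both sides denote the $|n|$-fold preimage sets, exactly in the sense used in the definition of expansivity.

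Next I would extract an expansivity constant for $f$ from the modulus of continuity of $\phi$. Because $X$ is compact and $\phi$ is continuous, $\phi$ is uniformly continuous, so there is $\epsilon>0$ such that $d_{X}(a,b)\le\epsilon$ implies $d_{\Sigma}(\phi(a),\phi(b))< 1/2$; this $\epsilon$ is the candidate expansivity constant for $f$. To verify it, fix $x\neq y$ in $X$. Since $\phi$ is injective, $\phi(x)\neq\phi(y)$, so by expansivity of $\sigma_{\tau}$ there is $n\in\mathbb{Z}$ with $d_{\Sigma}\big(\sigma_{\tau}^{n}(\phi(x)),\sigma_{\tau}^{n}(\phi(y))\big)>1/2$ (read as minimum distance between the two preimage sets when $n<0$). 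If $n\ge 0$, the intertwining relation gives $d_{\Sigma}\big(\phi(f^{n}(x)),\phi(f^{n}(y))\big)>1/2$, and the contrapositive of the choice of $\epsilon$ forces $d_{X}(f^{n}(x),f^{n}(y))>\epsilon$. If $n<0$, suppose toward a contradiction that $d_{X}\big(f^{n}(x),f^{n}(y)\big)\le\epsilon$; choosing $a\in f^{n}(x)$ and $b\in f^{n}(y)$ realizing this minimum distance gives $d_{\Sigma}(\phi(a),\phi(b))<1/2$ with $\phi(a)\in\sigma_{\tau}^{n}(\phi(x))$ and $\phi(b)\in\sigma_{\tau}^{n}(\phi(y))$, so the minimum distance between $\sigma_{\tau}^{n}(\phi(x))$ and $\sigma_{\tau}^{n}(\phi(y))$ would be $<1/2$, contradicting the choice of $n$. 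Hence in all cases $d_{X}(f^{n}(x),f^{n}(y))>\epsilon$, and $f$ is expansive with constant $\epsilon$.

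The one point that requires care — and the only place this departs from "expansivity is a topological conjugacy invariant on compact metric spaces" — is the negative-index case: one must check that $\phi$ genuinely maps $|n|$-fold preimage sets to $|n|$-fold preimage sets (immediate, since $\phi$ is a bijection intertwining $f$ and $\sigma_\tau$), and that the minimum-distance convention used in the definition of expansivity is respected by the uniformly continuous $\phi$ in the one direction needed above. Neither presents a real obstacle; the remainder is the standard compactness argument, and one may also remark at the end that the same reasoning applied to $\phi^{-1}$ shows expansivity transfers in both directions, so the property is intrinsic to the conjugacy class.
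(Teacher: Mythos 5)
Your argument is correct and is essentially the same as the paper's: both hinge on the uniform continuity of $\phi$ on the compact space $X$ together with the expansivity constant $1/2$ of $\sigma_{\tau}$ from Proposition \ref{prop:1}, the only difference being that the paper phrases it contrapositively (if all iterates stay within $e$ then $x=y$) while you argue directly. Your explicit treatment of the negative-index case --- checking that $\phi$ maps $|n|$-fold preimage sets onto $|n|$-fold preimage sets and that the minimum-distance convention is respected --- is a detail the paper's proof silently glosses over, and it is a worthwhile addition rather than a departure in method.
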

\begin{proof}
If $e>0$ be the expansivity constant and $x,y \in X$, we need to show that if $d_X(f^n(x),f^n(y))<e$ for all $n\in \mathbb{Z}$, then $x=y$. By uniform continuity of $\phi$, exists some $\delta>0$ such that if $d_X(f^n(x),f^n(y))<\delta$, then $d_\Sigma(\phi\circ f^n(x),\phi\circ f^n(y))<1/2$. Let $e=\delta$ then as $d_\Sigma(\phi\circ f^n(x),\phi\circ f^n(y))=d_\Sigma(\sigma_{\tau}^n\circ \phi(x),\sigma_{\tau}^n\circ \phi(y))$
	for all $n\in\mathbb{Z}$, one has $\phi(x)=\phi(y)$, which implies $x=y$.
\end{proof}

\begin{definition}[\textbf{Topological Transitivity}]
	Let $X$ be a compact metric space and $f:X\to X$ be a continuous map. Then $f$ is topologically transitive, if for any two non-empty disjoint open subsets $U,V\subset X$,
 there exists some natural number $m$, such that $f^{m}(U)\cap V\neq \emptyset.$ If $X$ has no isolated points then the existence of a forward dense orbit 
 (transitivity) implies the topological transitivity \cite{Akin and Carlson:2012}.
\end{definition}

\begin{definition}[\textbf{Pre-Transitivity}]
	Let $X$ be a compact metric space and $f:X\to X$ be a non-invertible map. Then $f$ is called \textit{pre-transitive}, 
if there exists some $x\in X$, for which, the set of all pre-images of $x$ i.e. $\cup_{n=0}^{+\infty} f^{-n}(x)$, is dense in $X$. 
\end{definition}

The pre-transitivity is used to study the uniqueness of SRB measures for endomorphisms \cite{Mehdipour: 2018}. 

\begin{theorem}\label{thm:2}
	Any  zip shift map is transitive and pre-transitive.
\end{theorem}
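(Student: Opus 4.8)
The plan is to treat the two assertions separately, in each case reducing to a combinatorial statement: the existence of a single point whose \emph{forward} orbit visits every basic cylinder (for transitivity) and of a single point whose union of \emph{backward} orbits visits every basic cylinder (for pre-transitivity).

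\emph{Transitivity.} Since $(\Sigma,d)$ is a Cantor set it has no isolated points, so by the criterion recalled in the definition of topological transitivity it suffices to produce one $x\in\Sigma$ with $\{\sigma_\tau^{n}(x):n\ge 0\}$ dense, i.e.\ meeting every cylinder $C_i^\ell=[s_i\cdots s_{i+\ell}]$. A direct computation from Definition \ref{zip shift} shows that for $n\ge 0$ one has $(\sigma_\tau^{n}x)_j=x_{n+j}$ when $j\ge 0$, $(\sigma_\tau^{n}x)_j=\tau(x_{n+j})$ when $-n\le j\le -1$, and $(\sigma_\tau^{n}x)_j=x_{n+j}$ when $j\le -n-1$. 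Fix any $c\in Z$ and take $x=(\cdots c\,c\,c\,\textbf{.}\,x_0x_1x_2\cdots)$ with $x_0x_1x_2\cdots$ a sequence over $S$. Given a cylinder $C_i^\ell$, define a word $w=u_0\cdots u_\ell$ over $S$ by $u_t=s_{i+t}$ if $i+t\ge 0$ and $u_t\in\tau^{-1}(s_{i+t})$ (nonempty since $\tau$ is onto) if $i+t<0$; then $\sigma_\tau^{\,m-i}(x)\in C_i^\ell$ whenever $w$ occurs in $x_0x_1\cdots$ starting at a position $m$, provided $m$ is large enough to make $n=m-i\ge 0$ and all the relevant indices land correctly. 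Hence it is enough to choose $x_0x_1\cdots$ to be a \emph{universal} sequence over the finite alphabet $S$, one in which every finite $S$-word occurs infinitely often (concatenate an enumeration of all finite $S$-words in which each word is repeated infinitely often). Selecting an occurrence of $w$ far enough to the right supplies the required $n$, so the forward orbit of $x$ is dense and $\sigma_\tau$ is topologically transitive.

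\emph{Pre-transitivity.} Here I construct $y\in\Sigma$ with $\bigcup_{n\ge 0}\sigma_\tau^{-n}(y)$ dense. Dualizing the computation, for $n\ge 1$ one checks that $z\in\sigma_\tau^{-n}(y)$ precisely when $z_j=y_{j-n}$ for all $j\notin\{0,\dots,n-1\}$ and $\tau(z_j)=y_{j-n}$ for all $j\in\{0,\dots,n-1\}$; in particular $z_0,\dots,z_{n-1}$ may be chosen freely inside the nonempty fibres $\tau^{-1}(y_{-n}),\dots,\tau^{-1}(y_{-1})$. Fix $b\in S$ and take $y=(\cdots y_{-2}y_{-1}\,\textbf{.}\,b\,b\,b\,\cdots)$. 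Given $C_i^\ell=[s_i\cdots s_{i+\ell}]$, define a word $W=W_0\cdots W_\ell$ over $Z$ by $W_t=s_{i+t}$ if $i+t<0$ and $W_t=\tau(s_{i+t})$ if $i+t\ge 0$. For $n$ suitably large, if $W$ occurs in the negative part of $y$ starting at position $i-n$, then some $z\in\sigma_\tau^{-n}(y)$ lies in $C_i^\ell$: set $z_{i+t}=s_{i+t}$ for the indices with $i+t\ge 0$ (legitimate because $\tau(s_{i+t})=W_t=y_{i+t-n}$), choose the remaining free coordinates arbitrarily, and note that for the indices with $i+t<0$ the forced value is $z_{i+t}=y_{i+t-n}=W_t=s_{i+t}$. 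Thus it suffices to build the negative tail $\cdots y_{-2}y_{-1}$ so that every finite word over the finite alphabet $Z$ occurs in it at arbitrarily negative positions (again a concatenation argument), whence for each $W$ an admissible large $n$ exists. Therefore $\bigcup_{n\ge 0}\sigma_\tau^{-n}(y)$ is dense and $\sigma_\tau$ is pre-transitive.

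\emph{Anticipated obstacle.} No single step is deep, but the point needing care — and where the zip-shift structure, as opposed to the ordinary two-sided shift, genuinely enters — is the treatment of cylinders involving coordinates on the ``wrong'' side of the marker: the negative ($Z$-valued) coordinates for the forward orbit, and the nonnegative ($S$-valued) coordinates for the backward orbit. In each case the mismatch is absorbed by the surjectivity of $\tau$: for transitivity one realizes a prescribed $Z$-symbol as $\tau$ of an $S$-symbol planted in the universal sequence, and for pre-transitivity one realizes a prescribed $S$-symbol through a free preimage choice lying over the correct $Z$-symbol. Besides this, the only mildly delicate matter is the index bookkeeping (how large $n$ must be relative to $i$ and $\ell$, ensuring the occurrences sit at genuinely negative positions, etc.), which Lemma \ref{dis-equal} and the explicit cylinder description make routine.
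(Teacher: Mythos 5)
Your proof is correct and follows essentially the same route as the paper: both arguments build a point whose forward (resp.\ backward) orbit is dense by concatenating an enumeration of all finite words over $S$ (resp.\ $Z$) and use the surjectivity of $\tau$ to match the coordinates that cross the marker. Your uniform construction of the target word $w$ (resp.\ $W$) merely streamlines the paper's three-case analysis on the position of the cylinder, and your explicit formulas for $(\sigma_\tau^{n}x)_j$ and for $\sigma_\tau^{-n}(y)$ make the index bookkeeping more transparent than in the original.
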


\begin{proof}
	Without any loss of generality, let us assume that $Z=\{a_1,\dots,a_m\}$ and $S=\{0,\dots,\ell-1\}$ where $m\leq l$ and $\tau:S\to Z$ is the associated surjective map. 
Let $\beta_n(S)$ be the set of all blocks in $\Sigma$   of length $n$ with letters in $S$ (in a lexicographical order). Therefore $\#(\beta_n(S))=\ell^n$. Note that, when $u=x_i\ldots x_j$ is a finite block of $x\in \Sigma$ with $i\geq 0$, then 
$\tau(u)$ is defined as $\tau (x_i)\ldots \tau (x_j)$.

	For $n\geq 1$, sort the blocks in $\beta_n(S)$   successively and  consider the following point in $\Sigma_{Z,S}$:
	$$x=(\dots\,\textbf{.}\,x_1^1\,x_1^2\,\dots\,x_1^{\ell}\,\dots\, x_{n}^1\,\dots\,x_{n}^{\ell_{n}}\, \dots),$$ where  $x_{n}^j\in \beta_n(S)$ for $1\leq j\leq \ell^n$. 
 In this way $x$ represents a dense orbit in $\Sigma$.  
	For any cylinder set $C_i^{\ell}$ one can find some $k>0$ that $\sigma_{\tau}^k(x)\in C_{i}^{\ell}$. 
In fact, it is not difficult to verify that if $i\geq 0,$ $\text{\underline{or}},$ if $i+\ell<0$, then there exists some block
 $x_{\ell}=c_i \dots c_{i+\ell}\in \beta_{\ell}(S)$ that coincides with $s_i \dots s_{i+\ell}$ $\text{\underline{or}},$ 
 with $\tau(s_i)\dots \tau(s_{i+\ell})$, such that, for some $k>0$,  $\sigma_{\tau}^{k}(x)\in C_{i}^{\ell}$. 
 Moreover, if $i<0$ and $i+\ell>0$, then again there exists some block $x_{\ell}=c_k \dots c_{k+\ell}\in \beta_{\ell}(S)$ 
 such that $c_k = s_i,\dots, c_{k+i-1}=s_{-1} ,c_{k+i}=s_0,\dots,c_{k+l}=s_{i+l}$. 
 Thereupon, there exists some $k>0$ that $\sigma_{\tau}^{k}(x)\in C_{i}^{\ell}$. Thus, $\sigma$ is transitive.
	
Now we show that $\sigma_{\tau}$ is pre-transitive. As $\sigma_{\tau}$ is a  zip shift map and in general represents a finite-to-1 map, there are infinitely many $x$ with a forward transitive orbit. Consider the following point in $\Sigma_{Z,S}$,
$$x=(\dots\,x_{-n}^{\ell_{n}}\,\dots\, x_{-n}^1\,\dots\,x_{-1}^{\ell}\,\dots\,x_{-1}^1\,\textbf{.}\,x_1^1\,x_1^2\,\dots\,x_1^{\ell}\,\dots\, x_{n}^1\,\dots\,x_{n}^{\ell_{n}}\, \dots),$$ where  $x_{n}^j\in \beta_n(S)$ for $1\leq j\leq \ell^{n}$ and $x_{-n}^j\in \beta_n(Z)$ for $1\leq j\leq m^n$. 

Now let $C_{i}^{\ell}=[s_i\,\dots\, s_{i+\ell}]\subset (\Sigma,d)$ be an arbitrary cylinder set. Again there are three different possibilities that in each case, we find some pre-image $q$ of $x$ that belongs to $C_{i}^{\ell}$. 
\begin{itemize}
	\item If $i\geq 0$, there exists some block $x_{-\ell}=c_i  \dots  c_{i+\ell}\in \beta_{\ell}(Z)$ that $\tau(x_{-\ell})$ coincides with block $s_i \dots s_{i+\ell}$. Indeed, one can find $k>0,$ for which, there exists some $y\in \sigma_{\tau}^{-k}(x)$ that belongs to $C_{i}^{\ell}$. Note that $y$ contains block $s_i \dots s_{i+\ell}$ starting at $i$.
	\item If $i+\ell< 0$,
it is enough to choose some block $x_{-(l+1)}=c_{1}\dots c_{i+\ell+1}$ where $c_{2}=s_i,\dots, c_{i+\ell+1}=s_{i+\ell}$. Then there exists $k>0$ that any $y\in \sigma_{\tau}^{-k}(x)$ belongs to $C_{i}^{\ell}$. 
Note that $y$ contains block $s_i \dots s_{i+\ell}$ starting at $i$.
	\item If $i<0$ and $i+\ell>0$, take some $x_{-\ell}=c_1 \dots c_{l}$ where $c_1=s_i, c_{i}=s_{-1}$ and $c_{i+1}=\tau(s_0),\dots, c_{\ell}=\tau(s_{i+\ell})$. Then there exists $k>0$ and $y\in \sigma_{\tau}^{-k}(x)$ that $y\in C_{i}^{\ell}$. Note that in this case the $y$ is unique and contains block $s_{0} \dots s_{i+\ell}$ starting at $0$-entry.
	
\end{itemize}

Therefore, the zip shift map is pre-transitive.
\end{proof}

\section{The Conley-Moser Conditions}\label{sec:4}
In this section we express the sufficient conditions in order to have an invariant Cantor set for an $N$-to-1 local homeomorphism, on which the dynamic
 is topologically conjugate to a  zip shift map. 
These conditions were given by Conley and Moser  \cite{Wiggins:1990, Moser and Holmes:1973}, for an invertible map and we aim to extend 
them for $N$-to-1 local homeomorphisms.
\begin{definition}[\textit{Horizontal/Vertical curves}]\label{def:v-h curve}
For $L \in\{h,\,v\}$, let
$0<\mu_L <1$, be a real number. By a horizontal curve, we mean the graph of a function $y=h(x)$ for which $0\leq h(x) \leq 1$ and 
	$$|h(x_1)-h(x_2)|\leq \mu_h|x_1-x_2|,\,\,\,\,\text{for}\,\,\, x_1,x_2\in [0,1].$$
	Similarly, by a vertical curve, we mean the graph of a function $x=v(y)$, for which $0\leq v(y) \leq 1$ and 
	$$|v(y_1)-v(y_2)|\leq \mu_v|y_1-y_2|,\,\,\,\,\text{for}\,\,\,y_1,y_2\in [0,1].$$
\end{definition}
Note that, by Definition \eqref{Def:p-t-1}, an $N$-to-1 local homeomorphism contains $ N $ distinct vertical curves that are mapped homeomorphically onto a single horizontal curve. 
In such cases, we may refer to them as $N$-to-1 horizontal-vertical  curve (\textit{HV-curve}). See Figure \ref {fig:3}, for a 2-to-1 HV-curve.

One can consider strips instead of these curves and obtain $N$-to-1  horizontal-vertical strip abbreviated as \textit{HV-strip}. 
Next, we indicate the two boundary curves of a horizontal strip (resp. vertical strip) by $h(x)$ and $h'(x)$ (resp. $v(y)$ and $v'(y)$). 
Notice that the "$ '\, $" is adopted as a notational convention and should not be confused with derivation symbol.

\begin{definition}[\textit{Horizontal/Vertical strips}]\label{v-h strips}
	Given a pair of non-intersecting horizontal curves, $h(x)$ and $h^{'}(x)$ such that $0\leq h(x)<h^{'}(x)\leq 1$, define a horizontal strip as,
	\begin{eqnarray}\label{HL}
	H=\{(x,y)\in \mathbb{R}^2|\, y\in[h(x),h^{'}(x)],\,x\in[0,1]\}.
	\end{eqnarray}
	Similarly, given two non-intercepting vertical curves $v(y)$ and $v^{'}(y)$ such that $0\leq v(y)<v^{'}(y)\leq 1,$ a vertical strip is defined as,
	\begin{eqnarray}\label{VL}
	V=\{(x,y)\in \mathbb{R}^2|\,x\in[v(y),v^{'}(y)],\,y\in[0,1]\}.
	\end{eqnarray}
\end{definition}                                                            
Let $|.|$ be the usual distance in $\mathbb{R}^2.$ Then the width of horizontal and vertical strips are defined as follows.  

\begin{align}\label{dv}
d(V)=\max_{y\in[0,1]}\,\,\,|v(y)-v^{'}(y)|,\\
d(H)= \max_{x\in[0,1]}\,\,\,|h(x)-h^{'}(x)|.
\end{align}

\begin{figure}[h]
	\centering
	\includegraphics[scale=0.5]{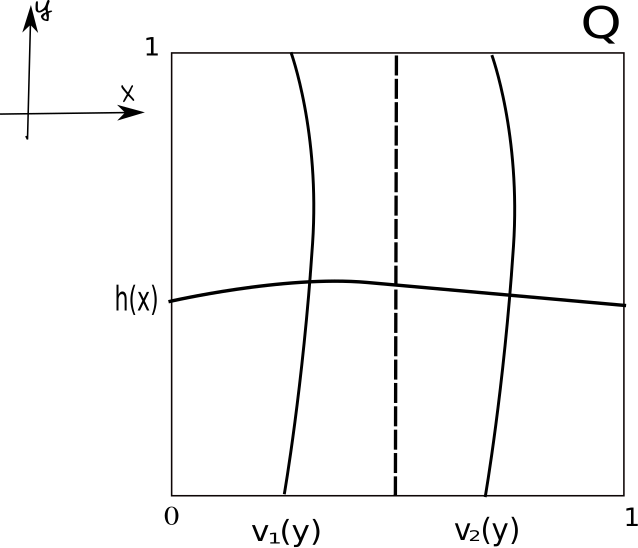}
	\caption{A 2-to-1 horizontal-vertical curve (HV-curve)}
	\label{fig:3}
\end{figure}
\begin{lemma}\label{v-h curve}
	i) If $H_1 \supset H_2\supset\cdots \supset H_k\supset \cdots$ is a nested sequence of horizontal strips with $d(H_k)\to 0$ as
 $k\to \infty,$ then $H_{\infty}:=\bigcap_{k=1}^{\infty}H_k$ is a horizontal curve.\\
	ii) If $V^{1}_{i} \supset V^{2}_{i}\supset\cdots \supset V^{k}_{i}\supset \cdots$, ($i=1,\cdots,N$) 
	is a nested sequences of vertical strips with $d(V^{k}_{i})\to 0$ as $k\to \infty,$ then $V^{\infty}_i:=\bigcap_{k=1}^{\infty}V^{k}_{i}$ is a vertical curve. 
\end{lemma}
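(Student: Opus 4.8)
The plan is to treat parts (i) and (ii) in parallel, since (ii) is essentially $N$ simultaneous copies of the argument in (i); I will write the argument for horizontal strips and then indicate the trivial modifications needed for vertical strips. First I would set up the right function-space framework: each horizontal strip $H_k$ is bounded below and above by horizontal curves $h_k(x)$ and $h'_k(x)$, i.e.\ graphs of $\mu_h$-Lipschitz functions $[0,1]\to[0,1]$. The nesting $H_1\supset H_2\supset\cdots$ forces $h_k(x)$ to be nondecreasing in $k$ and $h'_k(x)$ to be nonincreasing in $k$, for every fixed $x$, while $0\le h_k(x)\le h'_k(x)\le 1$. Hence for each $x$ the limits $h_\infty(x):=\lim_k h_k(x)$ and $h'_\infty(x):=\lim_k h'_k(x)$ exist (monotone bounded sequences of reals), and the hypothesis $d(H_k)=\max_x|h_k(x)-h'_k(x)|\to 0$ gives $h_\infty(x)=h'_\infty(x)$ for all $x$; call this common function $h_\infty$. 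The set $H_\infty=\bigcap_k H_k$ then consists exactly of the points $(x,y)$ with $h_k(x)\le y\le h'_k(x)$ for all $k$, which squeezes to $y=h_\infty(x)$, so $H_\infty$ is the graph of $h_\infty$.

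Next I would verify that $h_\infty$ is itself a horizontal curve in the sense of Definition \ref{def:v-h curve}, namely that it is $\mu_h$-Lipschitz with values in $[0,1]$. The value bound is immediate from $0\le h_k(x)\le 1$ passing to the limit. For the Lipschitz bound, fix $x_1,x_2\in[0,1]$; for each $k$ we have $|h_k(x_1)-h_k(x_2)|\le\mu_h|x_1-x_2|$, and letting $k\to\infty$ (the left side converges since both terms converge) preserves the inequality, giving $|h_\infty(x_1)-h_\infty(x_2)|\le\mu_h|x_1-x_2|$. That completes (i). For (ii), for each fixed $i\in\{1,\dots,N\}$ the sequence $V^1_i\supset V^2_i\supset\cdots$ is an independent nested sequence of vertical strips with widths $d(V^k_i)\to 0$; applying the transposed argument (swapping the roles of $x$ and $y$, using $v$-curves and the constant $\mu_v$) yields that $V^\infty_i=\bigcap_k V^k_i$ is the graph of a $\mu_v$-Lipschitz function $v^\infty_i:[0,1]\to[0,1]$, hence a vertical curve. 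Since $i$ was arbitrary this gives all $N$ conclusions.

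I do not expect a genuine obstacle here: the content is the standard "nested intervals fibrewise, then check the boundary limits agree and are Lipschitz'' argument, and everything reduces to monotone convergence plus stability of a Lipschitz bound under pointwise limits. The one place to be slightly careful is logical bookkeeping: one must confirm that the monotonicity of $h_k$ and $h'_k$ in $k$ really does follow from $H_{k+1}\subset H_k$ (it does, because at each $x$ the vertical slice $[h_{k+1}(x),h'_{k+1}(x)]$ is contained in $[h_k(x),h'_k(x)]$), and that $\bigcap_k H_k$ contains no extra points off the graph (ruled out precisely by $d(H_k)\to 0$). It is worth noting that the argument gives uniform, not merely pointwise, convergence $h_k\to h_\infty$ — since $\sup_x|h_k(x)-h_\infty(x)|\le\sup_x|h_k(x)-h'_k(x)|=d(H_k)\to0$ — although uniformity is not needed for the statement as phrased; I would mention it only in passing if at all.
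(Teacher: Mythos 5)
Your proof is correct, and it takes a somewhat different route from the paper's. The paper works in the space $C_{\mu}[0,1]$ of $\mu$-Lipschitz functions equipped with the maximum norm, observes that this is a complete metric space, interleaves the two boundary functions of the strips into a single sequence, asserts that $d(V^k)\to 0$ makes this sequence Cauchy, and invokes completeness to obtain a limiting curve. You instead argue fibrewise: nesting forces $h_k(x)$ to increase and $h'_k(x)$ to decrease in $k$ at each fixed $x$, monotone bounded convergence gives pointwise limits, $d(H_k)\to 0$ squeezes them together, and the Lipschitz bound survives the pointwise limit. Your version has two small advantages worth noting. First, the paper's claim that the interleaved sequence is Cauchy "since $d(V^k)\to 0$" is not literally a consequence of the width condition alone --- it also needs the nesting, exactly via the monotonicity you make explicit (which yields $\Vert h_k - h_{k+m}\Vert \leq \Vert h_k - h'_k\Vert = d(H_k)$); you supply that missing bookkeeping. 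Second, you verify that $\bigcap_k H_k$ is precisely the graph of the limit function, i.e.\ that no extra points survive in the intersection, a point the paper's proof leaves implicit. What the paper's approach buys in exchange is brevity and a framework (completeness of $C_{\mu}[0,1]$) that it reuses implicitly elsewhere. Your closing remark that convergence is in fact uniform is consistent with, and essentially recovers, the paper's sup-norm formulation.
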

\begin{proof}
	We prove the item (ii). Item (i) can be proved analogously. 
	Let $C_{\mu}[0,1]$ denote the set of Lipschitz functions with Lipschitz constant $\mu$ defined on the interval $[0,1]$. Observe that with the 
	maximum norm as a metric, $C_{\mu}[0,1]$ is a complete metric space.
	Let $x=v_{1}^{k}(y)$ and $x=v_{2}^{k}(y)$ be the boundaries of a vertical strip $V_{i}^k$. 
Consider the sequence $$\{v^{1}_{1}(y),v^{1}_{2}(y),v^{2}_{1}(y),v^{2}_{2}(y),\dots v^{k}_{1}(y),v^{k}_{2}(y),\dots\}.$$
	By Definition \ref{v-h curve} the elements of the above sequence belong to $C_{\mu}[0,1]$. Since $d(V^k)\to 0$ as $k\to\infty,$ it is a Cauchy sequence. 
Therefore, it converges to a unique vertical curve, denoted by $V_{i}^{\infty}.$
\end{proof}
\begin{lemma}\label{v-h intersection}
	For an $N$-to-1 HV-curve, any of the vertical curves intersects the horizontal curve in a unique point.
\end{lemma}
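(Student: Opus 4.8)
The plan is to reduce the intersection problem to a scalar fixed-point equation and then invoke the contraction mapping principle. Fix one of the $N$ vertical curves of the HV-curve, say $x = v(y)$; the argument will be identical for each of the $N$ constituent vertical curves. Let $y = h(x)$ be the (single) horizontal curve. A point $(x,y)$ lies on both curves precisely when $y = h(x)$ and $x = v(y)$ hold simultaneously, which is equivalent to the single equation $x = v(h(x))$ together with the determination $y = h(x)$.

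Accordingly, I would introduce the map $g : [0,1] \to [0,1]$ given by $g(x) = v(h(x))$. It is well defined because, by Definition \ref{def:v-h curve}, $0 \le h(x) \le 1$ for all $x \in [0,1]$ and $0 \le v(y) \le 1$ for all $y \in [0,1]$, so the composition makes sense and lands back in $[0,1]$. Next I would verify that $g$ is a contraction: for $x_1, x_2 \in [0,1]$, applying the two Lipschitz estimates of Definition \ref{def:v-h curve} in turn,
\[
|g(x_1) - g(x_2)| = |v(h(x_1)) - v(h(x_2))| \le \mu_v\,|h(x_1) - h(x_2)| \le \mu_v \mu_h\,|x_1 - x_2|,
\]
and since $0 < \mu_h < 1$ and $0 < \mu_v < 1$ we have $\mu_v \mu_h < 1$. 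As $[0,1]$ with the usual metric is complete, the Banach fixed-point theorem applies.

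Hence $g$ has a unique fixed point $x^* \in [0,1]$. Setting $y^* = h(x^*)$, the point $(x^*, y^*)$ lies on the horizontal curve by construction and satisfies $x^* = v(h(x^*)) = v(y^*)$, so it also lies on the vertical curve; this gives existence. Conversely, any common point $(x,y)$ of the two curves forces $x$ to be a fixed point of $g$ and $y = h(x)$, so uniqueness of the fixed point gives uniqueness of the intersection point. Since the chosen vertical curve was arbitrary among the $N$ curves forming the HV-curve, each of them meets the horizontal curve in exactly one point. I do not expect a real obstacle here; the only things needing a word of care are that the curves stay inside the unit square so that $v \circ h$ is well defined, and that the product of the two Lipschitz constants is strictly less than $1$ — both immediate from Definition \ref{def:v-h curve}.
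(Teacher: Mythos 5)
Your proposal is correct and follows essentially the same route as the paper: both reduce the intersection to the fixed-point equation $x = v_i(h(x))$ on $[0,1]$ and apply the Contraction Mapping Theorem, with your version merely spelling out the contraction constant $\mu_v\mu_h < 1$ a bit more explicitly.
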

\begin{proof}
	The $N$-to-1 HV-curve can be represented by graphs $$x=v_{1}(y),\,\,x=v_{2}(y),\dots, x=v_{N}(y),y=h(x).$$ 
	Then $h(x)$ intersects $v_1(y),\dots,v_n(y)$ if there exist points $(a_i,b_i)\in Q_i$  such that $a_i=v_{i}(b_i),\,b_i=h(a_i).$ 
Each of the equations $x=v_{i}(h(x)),i=1\dots ,N,$  has a solution on $Q_i, i=1,\dots, N$. We intend to show that these solutions are unique. 
Observe that $I=[0,1]$ is a complete metric space and for each $i,$ the map $v_i\circ h:I\to I$  is a contraction mapping ($0<\mu<1$). 
By applying the Contraction Mapping Theorem \cite{Wiggins:1990}, each of the equations $x=v_{i}(h(x))$ have a unique solution. 
\end{proof}
For horizontal and vertical strips, with boundary curves $h(x),h'(x)$ and $v(y),v'(y)$ respectively, let
\begin{align}\label{strip norms}
\Vert h-h'\Vert:=\max_{x\in[0,1]}|h(x)-h'(x)|,\\
\Vert v-v'\Vert:=\max_{y\in[0,1]}|v(y)-v'(y)|.
\end{align}
\begin{lemma}\label{4.8}
	Let $\mu$ be the Lipschitz constant in Definition \ref{def:v-h curve}. For $L\in\{h,\,v\}$, let $H$ and $V$ be a pair of horizontal and vertical strips with $\mu_L$ Lipschitz 
boundary curves $h(x), h'(x)$ and $v(x), v'(x)$ respectively. Denote the intersection point of curves $h(x)$ and $v(y)$ by  $z_1=(x_1,y_1)$ 
and   the intersection point of curves $h'(x)$ and $v'(y)$ by $z_2=(x_2,y_2)$, where  $x_1,x_2, y_1,y_2\in I$. Then,
	\begin{align}\label{eq:4.8}
	|z_1-z_2|\leq \frac{1}{1-\mu}(\Vert v-v'\Vert+\Vert h-h'\Vert).
	\end{align}	
\end{lemma}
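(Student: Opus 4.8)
The plan is to exploit the defining geometry: $z_1=(x_1,y_1)$ lies on both $h$ and $v$, so $x_1=v(y_1)$ and $y_1=h(x_1)$; similarly $z_2=(x_2,y_2)$ satisfies $x_2=v'(y_2)$ and $y_2=h'(x_2)$. The goal is to bound $|x_1-x_2|$ and $|y_1-y_2|$ separately and then combine. First I would estimate one coordinate, say the $x$-coordinate, by inserting the mixed term $v(y_2)$: write
\[
|x_1-x_2|=|v(y_1)-v'(y_2)|\leq |v(y_1)-v(y_2)|+|v(y_2)-v'(y_2)|\leq \mu_v|y_1-y_2|+\Vert v-v'\Vert,
\]
using the Lipschitz bound from Definition \ref{def:v-h curve} for the first term and the strip-norm definition \eqref{strip norms} for the second. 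By the symmetric argument with $h$ and the mixed term $h(x_2)$,
\[
|y_1-y_2|=|h(x_1)-h'(x_2)|\leq \mu_h|x_1-x_2|+\Vert h-h'\Vert.
\]

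Next I would substitute the second inequality into the first to eliminate $|y_1-y_2|$:
\[
|x_1-x_2|\leq \mu_v\bigl(\mu_h|x_1-x_2|+\Vert h-h'\Vert\bigr)+\Vert v-v'\Vert
= \mu_v\mu_h|x_1-x_2|+\mu_v\Vert h-h'\Vert+\Vert v-v'\Vert.
\]
Since $0<\mu_h,\mu_v<1$ we have $\mu_v\mu_h<1$, so this can be solved for $|x_1-x_2|$, giving
\[
|x_1-x_2|\leq \frac{\mu_v\Vert h-h'\Vert+\Vert v-v'\Vert}{1-\mu_v\mu_h}\leq \frac{1}{1-\mu}\bigl(\Vert v-v'\Vert+\Vert h-h'\Vert\bigr),
\]
where in the last step I bound $\mu_v\le 1$ in the numerator and use $\mu_v\mu_h\le\mu$ (taking $\mu$ to be the larger, or a common, Lipschitz constant) in the denominator. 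The analogous bound for $|y_1-y_2|$ follows by the same elimination carried out the other way. Then $|z_1-z_2|\le \max\{|x_1-x_2|,|y_1-y_2|\}$ for the sup-metric on $\mathbb{R}^2$ (or, if $|\cdot|$ denotes the Euclidean norm, one absorbs the factor $\sqrt 2$ or simply notes $|z_1-z_2|\le|x_1-x_2|+|y_1-y_2|$ and reruns the linear-system bound on the sum), yielding \eqref{eq:4.8}.

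The only genuine subtlety—hardly an obstacle—is bookkeeping with the two constants $\mu_h,\mu_v$ versus the single $\mu$ in the statement: one must take $\mu=\max\{\mu_h,\mu_v\}$ (consistent with the phrasing of Lemma \ref{4.8}, which speaks of ``the Lipschitz constant $\mu$'') so that $1-\mu_h\mu_v\ge 1-\mu$ and the displayed constant $\tfrac{1}{1-\mu}$ is valid. A second minor point is which norm $|\cdot|$ denotes on $\mathbb{R}^2$; since the strips live in the unit square and earlier lemmas (e.g. Lemma \ref{v-h intersection}) work coordinatewise via the Contraction Mapping Theorem, the cleanest reading is the $\ell^\infty$ metric, for which $|z_1-z_2|=\max\{|x_1-x_2|,|y_1-y_2|\}$ makes the final line immediate. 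Everything else is the two-line mixed-term estimate and solving a $1\times 1$ linear inequality.
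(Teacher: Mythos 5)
Your proof is correct and follows essentially the same route as the paper: the identical mixed-term estimates $|x_1-x_2|\leq\mu_v|y_1-y_2|+\Vert v-v'\Vert$ and $|y_1-y_2|\leq\mu_h|x_1-x_2|+\Vert h-h'\Vert$, followed by solving the resulting linear inequality. The only cosmetic difference is that the paper fixes $\mu=\mu_h\mu_v$ (so $1-\mu=1-\mu_h\mu_v$ appears directly, with no need for your $\max$ convention), and it leaves implicit the final choice of norm on $\mathbb{R}^2$ that you rightly flag.
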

\begin{proof}
The proof is an adaptation of the proof given in \cite{Moser and Holmes:1973}.
	Observe that,
	\begin{align}\label{eq:1}
	|x_1-x_2|=|v(y_1)-v'(y_2)|
	& \leq |v(y_1)-v(y_2)|+|v(y_2)-v'(y_2)|\nonumber\\
	& \leq\mu_v|y_1-y_2|+\Vert v-v'\Vert
	\end{align}
	Similarly, 
	\begin{align}\label{eq:2}
	|y_1-y_2|=|h(x_1)-h'(x_2)|
	& \leq\mu_h|x_1-x_2|+\Vert h-h'\Vert.
	\end{align}
	 Statements (\ref{eq:1}) and (\ref{eq:2}) together with the fact that $0<\mu=\mu_h\mu_v<1$, gives
	\begin{align*}
	|z_1-z_2|\leq \frac{1}{1-\mu}(\Vert v-v'\Vert+\Vert h-h'\Vert).
	\end{align*}
\end{proof}
Let  $D\subset\mathbb{R}^2$ be a closed disk and $Q\subset D$ be a rectangle (unit rectangle) where $f:D\to D$ is an $N$-to-1 local homeomorphism. Here, for the sake of simplicity, we assume that $Z$ has two elements, but in general, for such an $N$-to-1 map, $Z$ can have any finite number of elements with $ \#S = N\times (\#Z).$ Set $ Z=\{a,b\}$ and
$S=\{1, \dots, N, 1', \dots, N'\},$ as two alphabet sets.
 
Suppose that $f: D\to D$  has $N$ local dynamics $f_i:Q_i\subset D\to f_i(Q_i)$ that satisfy the following conditions.

\textbf{Assumption 1:} There exists disjoint horizontal strips $H_a$ and $H_b$ that $f$ images $N$ disjoint vertical strips $V^i\subset Q_i,$ homeomorphically to the horizontal strip $H_a$ and $N$ disjoint 
vertical strips $V^{i'}\subset Q_i,$ homeomorphically to $H_b$ (i.e. $f(V^{i})=H_a,\,f(V^{i'})=H_b ,\, i, i'\in S $). Moreover, the horizontal and vertical boundaries 
are preserved i.e., horizontal and vertical boundaries map to the horizontal and the vertical boundaries respectively.

\textbf{Assumption 2:} For any vertical strip $V$ contained in some  $V^{l}\subset \bigcup _{i\in S}V^i$, the region
$\tilde{V}^{l}:=f^{-1}(V)\cap V^l$ is a vertical strip. Moreover, $d(\tilde{V}^{l})<\alpha_V\, d(V)$ for some $0<\alpha_V<1$.
Similarly, for $H$, any horizontal strip contained in an $H_k\subset \bigcup_{j\in Z}H_j$, the region $\tilde{H}_k:=f(H)\cap H_k,$
is a horizontal strip that
$d(\tilde{H}_k)\leq \alpha_H\,d(H)$ for some $0<\alpha_H<1$.

Our objective is to construct an invariant set contained in $(\bigcup_{i\in S}V^{i}\,) \,\cap\,(\bigcup_{j\in Z}H_{j}).$ First we construct a horizontal backward invariant set $\Lambda_{-\infty}$ and then the vertical forward invariant set $\Lambda^{+\infty}$. The final invariant set is made of the intersection of these two sets.

\textbf{Horizontal invariant set $\Lambda_{-\infty}$:}
Let $\bigcup_{s_{-1}\in Z}H_{s_{-1}}$ be the union of horizontal strips satisfying the Assumptions 1 and 2. For $k\geq 1$,
\begin{itemize}
	\item Let $H_{s_{-k-1}\dots s_{-1}}:=\{p\in Q|\,p\in f^{i-1}(H_{s_{-i}}), \,i=1,2,\cdots, k+1\}.$ 
	\item The set $\Lambda_{-k}:= \bigcup_{s_i\in S}\,H_{s_{-k-1}\cdots s_{-1}}$ contains $2^{k}$ horizontal strips where $2^{k-1}$ of them are contained in each $H_j,\, j\in Z.$ 
	\item Assumption 2 asserts that $d(H_{s_{-k}\dots s_{-1}})\leq \alpha_{H}^{k-1}$.
\end{itemize}
When $k\to \infty,$  define $\Lambda_{-\infty}:= \bigcup_{s_{-j}\in Z}\, \,H_{\dots s_{-k-1}\dots s_{-1}}$, where
$$H_{\dots\,s_{-k-1}\,\dots \,s_{-1}}:=\{p\in Q|\, p\in f^{i-1}(H_{s_{-i}}), \,i=1,2,\dots\}.$$
Observe that from Lemma \ref{v-h curve}, 
$d(H_{s_{-k-1} \dots s_{-1}})\to 0,$ as $k\to \infty$.\\
\textbf{Vertical invariant set $\Lambda^{+\infty}$:}
Let $\bigcup_{s_0\in S}V^{s_0}$  be the union of vertical strips satisfying the Assumptions 1 and 2. For $k\geq 1$,
\begin{itemize}
	\item Let $V^{s_0s_1\dots\,s_k}:=\{p\in Q|\, f^{i}(p)\in V^{s_i},\,\,i=0,1,\dots,k\}.$
	\item Let $\Lambda^{k}:= \bigcup_{s_i\in S}V^{s_0s_1\cdots\,s_k}$. Then  for $l_0=\#S=2N,$ the set $\Lambda^{k}$ consists 
of $l_k=(2N){l_{k-1}}$  vertical strips, where exactly $l_{k-1}$ strips are contained in each $V^i,\, i\in S.$ 
	\item Assumption 2 asserts that
	\begin{equation}\label{h-inf}
	d(V^{s_0s_1\cdots\,s_k})\leq \alpha_{V}^{k}.
	\end{equation} 
\end{itemize}	
When $k\to \infty,$  define $\Lambda^{+\infty}:= \bigcup_{s_i\in S, i=0,1,\cdots}V^{s_0s_1\cdots\,s_k\cdots}$,
where, $V^{s_0s_1\cdots\,s_k\cdots}=\{p\in Q|f^{i}(p)\in V^{s_i},\,i=0,1,\cdots\}.$
By Eq.(\ref{h-inf}), $d(V^{s_0s_1\dots\,s_k})\to 0$ when $k\to \infty$ and
by Lemma \ref{v-h curve}, $V^{s_0s_1\dots\,s_k,\dots}$ is a vertical curve. The invariant set $\Lambda$, over all iterations of $f$ in $Q,$
is given by $$\Lambda=\{\Lambda_{-\infty}\cap \Lambda^{+\infty}\}\subset \big\{\big(\bigcup_{i\in S}V^{i}\big)\cap \big(\bigcup_{j\in Z}H_{j}\big)\big\}\subset Q. $$
The set $\Lambda$ is uncountable and in fact is a Cantor set. In  Subsection \ref{subsec:3.1}, we perform a conjugacy map $\phi$ which clarifies this fact. 

\subsection{The conjugacy map $\phi$}\label{subsec:3.1}
In this subsection we construct a conjugacy map $\phi$ between the horseshoe map $f:\Lambda \to \Lambda$ and the zip shift map $\sigma_{\tau}:\Sigma \to \Sigma$. 
For $p\in\Lambda$, let
$$p=H_{\cdots s_{-k}s_{-2}\cdots s_{-1}} \cap V^{s_0s_1\cdots s_k\cdots},$$
where $s_{-i}\in Z,\,i=1,2,\cdots$ and $s_{i}\in S,\,i=0,1,2,\cdots$. Indeed, we associate to any point $p\in \Lambda$  a bi-infinite sequence, 
over $Z\cup S$. In other words, due to Lemma \ref{v-h intersection}, there exists a well-defined map $\phi$, 
\vspace{0.1cm}
\begin{center}
	\begin{tabular}{rl}
		$\phi:$ & $\Lambda \longrightarrow \Sigma$ \\
		& $p\longmapsto (\dots s_{-k}\dots s_{-1}\,\textbf{.}\,s_0\,s_1\dots s_k\dots).$
	\end{tabular}
\end{center}
\begin{proposition}
	The map $\phi:\Lambda \to \Sigma$ is a homeomorphism.
\end{proposition}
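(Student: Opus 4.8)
The plan is to show that $\phi$ is a well-defined bijection that is continuous, and then invoke compactness of $\Lambda$ together with Hausdorffness of $\Sigma$ to conclude that $\phi$ is a homeomorphism. I would organize the proof into the four standard steps: well-definedness, injectivity, surjectivity, and bicontinuity.

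First I would argue that $\phi$ is \emph{well-defined}. Given $p\in\Lambda$, by construction $p$ lies in a unique nested intersection $H_{\cdots s_{-k}\cdots s_{-1}}\cap V^{s_0 s_1\cdots s_k\cdots}$: the forward itinerary $(s_0,s_1,\dots)\in S^{\mathbb{N}}$ is determined by recording, for each $i\ge 0$, which vertical strip $V^{s_i}\subset\bigcup_{j\in S}V^j$ contains $f^i(p)$ (Assumption~1 guarantees the strips $V^j$ are disjoint, so $s_i$ is unambiguous), and the backward itinerary $(s_{-1},s_{-2},\dots)\in Z^{\mathbb{N}}$ is determined by recording which horizontal strip $H_{s_{-i}}$ contains $f^{-(i-1)}(p)$ using the disjoint strips $H_a,H_b$. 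The fact that $p$ is exactly the intersection point — i.e.\ that these sequences determine $p$ uniquely — follows from Lemma~\ref{v-h curve} (the nested strips shrink to a horizontal, resp.\ vertical, curve because $d(H_{s_{-k}\cdots s_{-1}})\le\alpha_H^{k-1}\to 0$ and $d(V^{s_0\cdots s_k})\le\alpha_V^{k}\to 0$) and Lemma~\ref{v-h intersection} (a vertical curve meets a horizontal curve in a single point). Moreover the resulting sequence satisfies \eqref{Z}, so $\phi(p)\in\Sigma$.

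Next, \emph{injectivity} is immediate from the preceding paragraph: if $\phi(p)=\phi(q)$ then $p$ and $q$ lie in the same intersection $H_{\cdots s_{-1}}\cap V^{s_0\cdots}$, which by Lemmas~\ref{v-h curve} and \ref{v-h intersection} is a single point, so $p=q$. For \emph{surjectivity}, given an arbitrary sequence $x=(\dots s_{-k}\dots s_{-1}\,\textbf{.}\,s_0 s_1\dots)\in\Sigma$ with $s_{-i}\in Z$ and $s_i\in S$, one forms the nested horizontal strips $H_{s_{-k-1}\cdots s_{-1}}$ and nested vertical strips $V^{s_0\cdots s_k}$; by compactness of $Q$ and Lemma~\ref{v-h curve} these intersect down to a horizontal curve $H_\infty$ and a vertical curve $V_\infty$, and by Lemma~\ref{v-h intersection} $H_\infty\cap V_\infty$ is a single point $p$ which by construction lies in $\Lambda$ and satisfies $\phi(p)=x$. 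Here one needs the compatibility built into the definition of $\Lambda_{-\infty}$ and $\Lambda^{+\infty}$ — namely that $f$ maps $\bigcup_i V^i$ onto $H_a\cup H_b$ in the prescribed $N$-to-1 way (Assumption~1) — to guarantee that the strips indexed by the chosen letters are actually nonempty at every level.

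Finally, for \emph{continuity} I would use Lemma~\ref{dis-equal} and Lemma~\ref{4.8}: if $d(\phi(p),\phi(q))$ is small, then $p$ and $q$ share a long central block of symbols, so they lie in a common horizontal strip $H_{s_{-k}\cdots s_{-1}}$ and a common vertical strip $V^{s_0\cdots s_k}$ with $k$ large; by Lemma~\ref{4.8} (applied to the boundary curves of these thin strips, whose widths are bounded by $\alpha_H^{k-1}$ and $\alpha_V^{k}$) the two intersection points satisfy $|p-q|\le\frac{1}{1-\mu}(\alpha_V^k+\alpha_H^{k-1})$, which tends to $0$ as $k\to\infty$; hence $\phi^{-1}$ is continuous. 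Conversely, to see $\phi$ is continuous one runs a similar estimate showing that points close in $Q$ must agree on a long central itinerary, since the strips $V^j$ (resp.\ $H_j$) are separated by a positive distance and the same holds for their images under finitely many iterates of the local homeomorphisms $f_i$. Since $\Lambda$ is compact (a closed subset of the compact $Q$) and $\Sigma$ is Hausdorff, a continuous bijection $\phi:\Lambda\to\Sigma$ is automatically a homeomorphism, so in fact it suffices to establish continuity of $\phi$ in one direction. The main obstacle I anticipate is the surjectivity step — specifically, verifying that \emph{every} admissible symbol sequence in $\Sigma$ corresponds to nonempty nested strips at all levels, which requires unwinding that Assumptions~1 and 2 indeed force the full $(2N)$-fold branching structure with no forbidden transitions; once that bookkeeping is done, the metric estimates are routine applications of Lemmas~\ref{dis-equal} and \ref{4.8}.
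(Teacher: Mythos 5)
Your proposal is correct and follows essentially the same route as the paper: injectivity and surjectivity via Lemma \ref{v-h curve} and Lemma \ref{v-h intersection}, the metric estimate via Lemma \ref{dis-equal}, Lemma \ref{4.8} and the bounds $\alpha_V^{M},\alpha_H^{M-1}$ from Assumption 2, and the closed-subset-of-compact-$Q$ observation to upgrade a continuous bijection to a homeomorphism. Your extra care about well-definedness, nonemptiness of the nested strips in the surjectivity step, and the explicit remark that only one direction of continuity is needed are refinements of, not departures from, the paper's argument.
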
	
\begin{proof}
	As $\Lambda$ is a closed subset of $Q,$ it is sufficient to show that $\phi$ is $1-1$, onto, and continuous.
	
	\textbf{$\phi$ is 1-1.} Let $p,p'\in \Lambda$. If $p\neq p'$, then $\phi(p)\neq \phi(p').$ By contradiction, assume that $p\neq p'$ 
and $\phi(p)=\phi(p')=(\dots s_{-n}\dots s_{-1}.\,s_0\,s_1\dots s_n\dots).$
	The construction of $\Lambda$ and Lemma \ref{v-h intersection} imply that points $p,p'$ represent the unique intersection of a vertical curve with a horizontal curve, 
which means $p=p'$ and contradicts the assumption. 
	
	\textbf{$\phi$ is Onto.} 
	For any $s=(\dots s_{-k}\dots s_{-1}\,.\,s_0s_1\dots s_k\dots)\in \Sigma,$ consider the vertical curve $V^{s_0s_1\dots s_k\dots}\in \Lambda^{+\infty}$
 and the horizontal curve $H_{\dots s_{-k}\dots s_{-2} s_{-1}}\in\Lambda_{-\infty}$. 
 By Lemma \ref{v-h intersection}, $V^{s_0s_1\dots s_k\dots}\cap H_{\dots s_{-k}\dots s_{-2} s_{-1}}$ is a unique point $p\in \Lambda$. 
 Thereupon,  $$\phi(p)=(\dots s_{-n}\dots s_{-1}.\,s_0\,s_1\dots s_n\dots).$$ 
	
	\textbf{$\phi$ is continuous.} Given any point $p\in \Lambda,$ and $\epsilon>0,$ one shows that there exists a $\delta>0$ such that $|p-p'|<\delta$
 implies $d(\phi(p), \phi(p'))<\epsilon,$ where $|.|$ is the usual distance in $\mathbb{R}^2$ and $d(.,.)$ is the metric on $\Sigma$.
	Let $\epsilon>0$. By Lemma \ref{dis-equal}, for $|\phi(p)-\phi(p')|<\epsilon$ to occur, there should exist an integer $M$ such that 
if $\phi(p)=(\dots s_{-n}\dots s_{-1}\,.\,s_0\dots s_n\dots )$ and $\phi(p')=(\dots t_{-n}\dots t_{-1}\,.\,t_0\dots t_n\dots )$, then $s_i=t_i$ for $|i|\leq M.$ 
However, the construction of $\Lambda$ implies that the points $p$ and $p'$ lie in the set defined by strips $V^{s_0\dots s_M}\cap H_{s_{-M}\dots s_{-1}}.$ 
	Denote the boundary curves of the strip $H_{s_{-1}\dots s_{-M}},$ by graphs $y=h_{1}(y), y=h_{1}^{'}(x)$ and the boundary curves of the 
strip $V^{s_0\dots s_M},$ by graphs $x=v_1(y),\,x=v_1^{'}(y).$  By Assumption 2,
	\begin{align}\label{v-h max}
	d(V^{s_0\dots s_M})\leq \alpha_{V}^{M} \,\,\,\,\,\,\,\,\text{and}\,\,\,\,\,\,\,\,d(H_{s_{-M}\dots s_{-1}})\leq \alpha_{H}^{M-1}.
	\end{align}
	By Eq.(\ref{dv}), Eq.(\ref{strip norms}) and Eq.(\ref{eq:4.8}),
	\begin{align*}
	\Vert h_1-h_1'\Vert\leq \alpha_{H}^{M-1}\,\,\,\,\,\,\,\text{and}\,\,\,\,\,\,\,\,\Vert v_1-v_1'\Vert\leq\alpha_{V}^{M}.
	\end{align*}
	Next, by Lemma \ref{4.8} and Eq.(\ref{v-h max}) the continuity of $\phi$ follows. Let $z_1$ denote the intersection of $h_1(y)$ with $v_1(x)$ and $z_2$ denote the intersection of
 $h_1^{'}(y)$ with $v_1^{'}(x)$. Since $p$ and $p'$ lie in the intersection of the horizontal and vertical strips  $ H_{s_{-M}\cdots s_{-1}}$ and $V^{s_0\cdots s_M}$, it follows that,	
	$$|p-p'|\leq |z_1-z_2|\leq \frac{1}{1-\mu}(\Vert v_1-v_1'\Vert+\Vert h_1-h_1'\Vert)\leq \frac{1}{1-\mu}(\alpha_{H}^{M-1}+\alpha_{V}^{M}).$$	
	Hence $\phi$ is a homeomorphism.
\end{proof}

The following theorem gives the sufficient condition for the existence of an invariant horseshoe set. As mentioned before, instead of the two horizontal strips (i.e. $Z=\{a,b\}$), 
one can consider any $k$ vertical strips (i.e. $Z=\{a_1,\cdots,a_k\}$) and find  an invariant Cantor set $\Lambda$ for $f$ on which it is topologically conjugate to a  zip shift. 
\begin{theorem}\label{Thm:3}
	Suppose that $f$ is an $N$-to-1 local homeomorphism, which satisfies the Assumptions 1 and 2. Then, $f$ has an invariant Cantor set $\Lambda$, which is 
topologically conjugate to a zip shift map, i.e. the following diagram commutes. 
	\begin{equation}\label{1-0}
	\begin{tikzcd}
		[row sep=tiny,column sep=tiny]& & & \\ \Lambda \arrow{rr}{f}\arrow{dd}{\phi} & & \Lambda\arrow{dd}{\phi} \\& \circlearrowright &  & \\ \Sigma\arrow{rr}{\sigma_{\tau}} & & \Sigma\\
	\end{tikzcd}
	\end{equation}
Here, $\sigma_{\tau}$ is a zip shift map and $\phi$ is a homeomorphism mapping $\Lambda$ onto a zip shift space $\Sigma$.
\end{theorem}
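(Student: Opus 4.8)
The construction preceding the statement already produces the set $\Lambda=\Lambda_{-\infty}\cap\Lambda^{+\infty}$ together with the map $\phi\colon\Lambda\to\Sigma$, and the Proposition above shows that $\phi$ is a homeomorphism. Since $(\Sigma,d)$ is a Cantor set, it follows at once that $\Lambda$ is a Cantor set. So the points still to be checked are: that $\phi\circ f=\sigma_{\tau}\circ\phi$ on $\Lambda$ (the content of the diagram \eqref{1-0}), that $f(\Lambda)=\Lambda$, and that $f$ restricted to $\Lambda$ is exactly $N$-to-$1$. The plan is to do all of this on the level of addresses, invoking Assumptions 1 and 2 only to guarantee that strips are carried to strips and boundaries to boundaries.

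First I would verify the conjugacy relation. Fix $p\in\Lambda$ and write
\[
p=H_{\cdots s_{-k}\cdots s_{-1}}\cap V^{s_0s_1\cdots s_k\cdots},\qquad s_{-i}\in Z,\ s_i\in S,
\]
so that $\phi(p)=(\cdots s_{-1}\,\textbf{.}\,s_0s_1\cdots)$. For the vertical address of $f(p)$, observe that $p\in V^{s_0s_1\cdots}$ gives $f^{i}(f(p))=f^{i+1}(p)\in V^{s_{i+1}}$ for all $i\ge 0$, hence $f(p)\in V^{s_1s_2\cdots}$. For the horizontal address, apply $f$ to the nested intersection that defines $H_{\cdots s_{-1}}$: from $p\in f^{i-1}(H_{s_{-i}})$ we get $f(p)\in f^{i}(H_{s_{-i}})$ for $i\ge 1$, and setting $j=i+1$ this reads $f(p)\in f^{j-1}(H_{s_{-(j-1)}})$ for $j\ge 2$, so the horizontal entries of $f(p)$ at places $j\ge 2$ are $s_{-(j-1)}\in Z$. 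The remaining entry, at place $j=1$, is determined by $p\in V^{s_0}$: Assumption 1 together with the identification $\tau(i)=a$, $\tau(i')=b$ forces $f(p)\in f(V^{s_0})=H_{\tau(s_0)}$. Thus
\[
f(p)\in H_{\cdots s_{-2}s_{-1}\tau(s_0)}\cap V^{s_1s_2\cdots}\subset\Lambda,
\]
and reading the address gives $\phi(f(p))=(\cdots s_{-1}\tau(s_0)\,\textbf{.}\,s_1s_2\cdots)=\sigma_{\tau}(\phi(p))$. In particular $f(\Lambda)\subseteq\Lambda$ and the square \eqref{1-0} commutes.

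Then I would read off invariance and the $N$-to-$1$ property from the conjugacy together with the already known facts $\phi(\Lambda)=\Sigma$ and $\sigma_{\tau}(\Sigma)=\Sigma$. Given $q\in\Lambda$, the homeomorphism $\phi$ puts the $f$-preimages of $q$ lying in $\Lambda$ in bijection with the $\sigma_{\tau}$-preimages of $\phi(q)=(\cdots t_{-1}\,\textbf{.}\,t_0t_1\cdots)$; the latter are precisely the sequences $(\cdots t_{-2}\,\textbf{.}\,c\,t_0t_1\cdots)$ with $c\in\tau^{-1}(t_{-1})$, and there are exactly $N$ of them because $\tau$ is $N$-to-$1$. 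Hence $f|_{\Lambda}$ is surjective (so $f(\Lambda)=\Lambda$) and exactly $N$-to-$1$, and $\phi$ is a topological conjugacy between $(\Lambda,f)$ and the zip shift $(\Sigma,\sigma_{\tau})$, which is what the diagram \eqref{1-0} asserts.

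The one place that needs care is the horizontal-address computation of $f(p)$ at the index-zero crossing: horizontal strips carry labels in $Z$ while vertical strips carry labels in $S$, and it is the boundary-preservation clause in Assumption 1 — that $f$ carries each vertical strip $V^{s_0}$ \emph{onto} the full horizontal strip $H_{\tau(s_0)}$, boundary to boundary — that lets one say $f$ maps the curve-intersection $H_{\cdots s_{-1}}\cap V^{s_0}$ to the curve-intersection $H_{\cdots s_{-1}\tau(s_0)}\cap V^{s_1\cdots}$, keeping the addresses well defined via Lemma~\ref{v-h intersection}. The rest is bookkeeping already set up by the construction and the preceding Proposition.
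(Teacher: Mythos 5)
Your proposal is correct and follows essentially the same route as the paper: it relies on the preceding proposition for the homeomorphism $\phi$ and verifies commutativity of the diagram by the same address computation $f(p)=H_{\cdots s_{-1}\tau(s_0)}\cap V^{s_1s_2\cdots}$. The only difference is that you spell out the index bookkeeping and additionally record the invariance $f(\Lambda)=\Lambda$ and the exact $N$-to-$1$ count via the preimage structure of $\sigma_{\tau}$, details the paper leaves implicit.
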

\begin{proof}
	The construction of the homeomorphism $\phi$ is shown in Section \ref{subsec:3.1}. It remains to show that Diagram (\ref{1-0}) commutes.
	For any $p\in \Lambda$,  
	\begin{equation}\label{phi}
	\phi(p)=(\cdots s_{-k}\cdots s_{-1}.\,s_0\,s_1\cdots s_k\cdots ).
	\end{equation}
	Applying the zip shift map $\sigma_{\tau}$ defined in \ref{sigmatau}, we obtain,
	\begin{equation}\label{tauphi}
	\sigma_{\tau}(\phi(p))=(\cdots s_{-k}\cdots s_{-1}\,\tau(s_0)\,.\,s_1\cdots s_k\cdots).
	\end{equation}
	Observe that for $p=H_{\cdots s_{-k}\cdots s_{-1}} \cap V^{s_0 s_1\cdots s_k\cdots},$ $$f(p)=f(H_{\cdots s_{-k}\cdots s_{-2} s_{-1}} \cap V^{s_0s_1\cdots s_k\cdots})=H_{\cdots s_{-k}\cdots s_{-1}\tau(s_{0})}\cap V^{s_1\cdots s_k\cdots}.$$ 
	Using the definition of $\phi$,
	$$\phi(f(p))=(\cdots s_{-k}\cdots s_{-1}\tau(s_{0})\,.\,s_1\cdots s_k\cdots),$$
	which completes the proof of the Theorem.
\end{proof}

\subsection{Sector Bundles}
In this section we modify Assumption 2 to Assumption 3, which is based only on the properties of the derivative of $f_k$ and is useful when $f$ has a differentiable structure.

Set $V^1,\cdots, V^N,V^{1'}\cdots, V^{N'}, H_a, H_b$ represent an $N$-to-1, HV-strip with Lipschitz constants $\mu_v,\mu_h$. 
Let $f$ map $V^1,\cdots, V^N$ to $H_a$ and map $V^{1'}\cdots, V^{N'}$ to $H_b$ diffeomorphically. 
Recall that $V^{1'}\cdots, V^{N'}$ are copies of $V^1,\cdots, V^N$. 
 Define $f(H_l)\cap H_m=H_{lm}$ and $f(V^j)\cap V^i=V_{ij}\in f^{-1}(H_{lm}),$ for $i,j \in S$ and $l,m\in Z$ (see Figure \ref{fig:5}).
Moreover, let $$\mathcal{V}=\bigcup_{i,j\in S}V_{ij},\,\,\,\,\,\,\,\,\mathcal{H}=\bigcup_{l,k\in Z}H_{lm}.$$
Then $f(\mathcal{H})=\mathcal{V}$ diffeomorphically.
For some $z_0\in \mathcal{H}\cap\mathcal{V}$ the unstable and stable cones will be  defined  as follows.
\begin{equation}\label{unstble sector}
	\mathcal{S}^{u}_{z_0}=\{(x_{z_0},y_{z_0})\in \mathbb{R}^2||x_{z_0}| < \mu_v\, |y_{z_0}|\}.
\end{equation}
\begin{equation}\label{unstbl sector}
	\mathcal{S}^{s}_{z_0}=\{(x_{z_0},y_{z_0})\in \mathbb{R}^2||y_{z_0}|< \mu_h\, |x_{z_0}|\}.
\end{equation}

Consider the union of the unstable and stable cones over the points of $\mathcal{H}$ and $\mathcal{V}$ as follows.
$$\mathcal{S}^{u}_{\mathcal{H}}=\bigcup_{z_0\in \mathcal{H}}\mathcal{S}^{u}_{z_0},\,\,\,\mathcal{S}^{s}_{\mathcal{H}}=\bigcup_{z_0\in \mathcal{H}}\mathcal{S}^{s}_{z_0},$$
$$\mathcal{S}^{u}_{\mathcal{V}}=
\bigcup_{z_0\in\mathcal{V}}\mathcal{S}^{u}_{z_0},\,\,\,\mathcal{S}^{s}_{\mathcal{V}}=\bigcup_{z_0\in \mathcal{V}}\mathcal{S}^{s}_{z_0}.$$

\begin{figure}[h]
	\centering
	\includegraphics[scale=0.5]{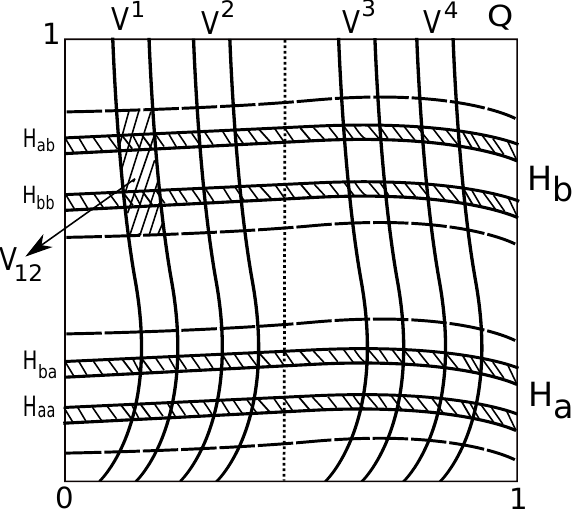}
	\caption{$H_{lm}$ and $V_{ij}$ for $n = 2$.}
	\label{fig:5}
\end{figure}

The alternative to Assumption 2, is as follows.\\
\textbf{Assumption 3:}  For any $k=1,\dots,m$, $df(\mathcal{S}^{u}_{\mathcal{V}})\subset \mathcal{S}^{u}_{\mathcal{H}}$ and 
$df_k^{-1}(\mathcal{S}^{s}_{\mathcal{H}})\subset \mathcal{S}^{s}_{\mathcal{V}}$.

Moreover, if $(x_{z_0},y_{z_0})\in \mathcal{S}^{u}_{z_0}$ and $d_{z_0}f(x_{z_0},y_{z_0})= (x_{f(z_0)},y_{f(z_0)})\in \mathcal{S}^{u}_{f(z_0)},$ 
then $$|y_{f(z_0)}|\geq (1/\mu)|y_{z_0}|.$$
Similarly for $k=1,\dots,n$, 
if,\\ $(x_{z_0},y_{z_0})\in \mathcal{S}^{s}_{z_0}$ and 
$d_{z_0}f_k^{-1}(x_{z_0},y_{z_0})= (x_{f_k^{-1}(z_0)},y_{f_k^{-1}(z_0)})\in \mathcal{S}^{s}_{f_k^{-1}(z_0)},$ then, $$|y_{f_k^{-1}(z_0)}|\geq (1/\mu)|y_{z_0}|,$$
where $0<\mu < (1-\mu_h\,\mu_v).$ 

\begin{theorem}\label{Thm:4}
	Assumption 1 and 3 with $0<\mu < (1-\mu_h\,\mu_v)$ implies  Assumption 2.
\end{theorem}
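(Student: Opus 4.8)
The plan is to derive the two conclusions of Assumption 2 — that $\tilde{V}^{l}=f^{-1}(V)\cap V^l$ is a vertical strip with $d(\tilde{V}^{l})<\alpha_V\,d(V)$ for some $0<\alpha_V<1$, and that $\tilde{H}_k=f(H)\cap H_k$ is a horizontal strip with $d(\tilde{H}_k)\le\alpha_H\,d(H)$ for some $0<\alpha_H<1$ — from Assumptions 1 and 3 in two stages. In the first stage I would promote the cone invariance of Assumption 3 to the statement that the branches $f_k$ and their inverses carry vertical (resp.\ horizontal) curves to vertical (resp.\ horizontal) curves, so that the intersections in question are again strips of the correct type; in the second stage I would convert the cone expansion estimate of Assumption 3 into the width contraction, keeping track of the constants so that $\alpha_V,\alpha_H<1$.

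\emph{Stage 1 (the regions are strips).} I would begin with the elementary observation that a $\mu_v$-Lipschitz vertical curve has all of its secant directions in the closed unstable cone $\mathcal{S}^{u}$, and dually that a $\mu_h$-Lipschitz horizontal curve has all of its secant directions in $\mathcal{S}^{s}$. Fixing a branch $f_k$ and a vertical curve $\gamma$ contained in $\bigcup_{i\in S}V^i$, and reading the inclusion $df_k(\mathcal{S}^{u}_{\mathcal{V}})\subset\mathcal{S}^{u}_{\mathcal{H}}$ of Assumption 3 in difference-quotient form — which is what is needed because the boundary curves are only Lipschitz, not $C^1$ — one gets that every secant of $f_k(\gamma)$ again lies in $\mathcal{S}^{u}$, so $f_k(\gamma)$ is the graph of a $\mu_v$-Lipschitz function of $y$; the boundary-preservation clause of Assumption 1 then forces this graph to run from the bottom of $Q$ to the top of $Q$, so it is a genuine vertical curve. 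The symmetric computation with $df_k^{-1}(\mathcal{S}^{s}_{\mathcal{H}})\subset\mathcal{S}^{s}_{\mathcal{V}}$ shows that each inverse branch $f_k^{-1}$ sends horizontal curves to horizontal curves. Applying these two facts to the two vertical boundary curves of $V$ and to the two horizontal boundary curves of the horizontal strip $f(V^l)\in\{H_a,H_b\}$, and using that $f|_{V^l}$ is a homeomorphism (Assumption 1), one reads off that $\tilde{V}^{l}$ is bounded on its sides by vertical curves and above and below by horizontal curves, hence is a vertical strip in the sense of Definition \ref{v-h strips}; the analogous argument applied to $f|_{H\cap V^i}$ shows that $\tilde{H}_k$ is a horizontal strip.

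\emph{Stage 2 (width contraction).} Here I would combine Lemma \ref{4.8} with the expansion estimate of Assumption 3. Picking two points on the two vertical boundary curves of $\tilde{V}^{l}$ that realize the width $d(\tilde{V}^{l})$ and pushing them forward by the relevant branch $f_k$, the hypothesis $|y_{f_k(z_0)}|\ge(1/\mu)|y_{z_0}|$ on vectors of $\mathcal{S}^{u}$ makes the transverse separation of their images inside $V$ at least $1/\mu$ times larger, while Lemma \ref{4.8} bounds that separation above by $\tfrac{1}{1-\mu_h\mu_v}$ times a quantity controlled by $d(V)$; chasing the inequalities yields $d(\tilde{V}^{l})\le\frac{\mu}{1-\mu_h\mu_v}\,d(V)$. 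Since $\mu<1-\mu_h\mu_v$ by hypothesis, $\alpha_V:=\frac{\mu}{1-\mu_h\mu_v}$ lies in $(0,1)$, which is exactly the bound Assumption 2 requires. The width estimate for $\tilde{H}_k$ follows in the same way from the $\mathcal{S}^{s}$-expansion estimate for the inverse branches together with Lemma \ref{4.8}, with $\alpha_H:=\frac{\mu}{1-\mu_h\mu_v}\in(0,1)$.

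\emph{Main obstacle.} The genuinely delicate step is Stage 1: converting the infinitesimal cone conditions of Assumption 3 into statements about boundary curves that are merely Lipschitz forces one to work with secant lines rather than tangents throughout, and — crucially — to invoke Assumption 1 to be sure that the image curve stretches all the way across $Q$ and does not terminate in the interior, since otherwise the resulting region would fail to be a strip in the sense of Definition \ref{v-h strips}. Once Stage 1 is in place, Stage 2 is routine; its only subtle point is the choice of the pair of points at which the widths are compared before and after applying $f_k$, and the numerical fact that $\mu<1-\mu_h\mu_v$ is precisely what guarantees $\frac{\mu}{1-\mu_h\mu_v}<1$.
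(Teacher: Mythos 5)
Your proposal follows essentially the same route as the paper's proof: Stage 1 corresponds to the paper's use of Assumption 1 together with the cone inclusions of Assumption 3 (applied in secant/Mean Value form) to show that the boundary curves of $\tilde{V}^{l}$ and $\tilde{H}_k$ are again $\mu_v$- and $\mu_h$-Lipschitz graphs spanning $Q$, and Stage 2 reproduces the paper's width estimate, pushing a width-realizing segment forward by the branch $f_k$, bounding its image via Lemma \ref{4.8} by $\frac{1}{1-\mu_h\mu_v}d(V)$ and its stretch from below by $1/\mu$ via the expansion clause of Assumption 3, arriving at the identical constant $\alpha_V=\frac{\mu}{1-\mu_h\mu_v}<1$. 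The argument is correct in the same sense and at the same level of detail as the paper's, so there is nothing substantive to add.
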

\begin{proof}
We prove  the theorem for the horizontal strips. The proof of the other case is analogous.
	
	Let $V$ be a $\mu_v$-vertical strip in $\bigcup_{i\in S}V^i$. Set $\tilde{V}^j=f_{k}^{-1}(V)\cap V^i$. 
Observe that when $f$ is an $N$-to-1 local diffeomorphism, any vertical strip $V$ has $N$ pre-images, but using the fact that $f$ is an $N$-to-1 local diffeomorphism, 
it guarantees that  $\tilde{V}^j$ in $V^i$ is unique. The strip $V$ intersects any horizontal strips $H_k, k\in Z.$ 
In special, it intersects their horizontal boundaries and using Assumption 1, one deduces that for any $j\in S,$ the region $f_{k}^{-1}(V)\cap \tilde{V}^j$ is a vertical strip which
 is a $\mu_v$-vertical strip. To see this,  assume that $(y_1,x_1), (y_2,x_2)$ are two arbitrary points that belong to
 a vertical boundary curve of ${V}^j$, for some fixed $j.$ Then Assumption 3 and the Mean Value Theorem gives
	$$|x_2-x_1|\leq \mu_v\,|y_2-y_1|.$$
By Assumption 3, $df_{k}^{-1}\mathcal{S}^{s}_\mathcal{H}\subset\mathcal{S}^{s}_\mathcal{V}.$ 
Indeed,  the boundaries of $\tilde{V}^j$ are the graphs of some functions $x={v}_1(y),\,x={v}_2(y).$
	
	Next, let $p_0$, $p_1$ be two points in boundaries of ${V}^j$, with the same $y$-coordinate. It is obvious that $d({V}^j)= |p_0-p_1|.$ 
Let $p(t)=p_1t+(1-t)p_0, \,0\leq t\leq 1$ be the vertical line which connects $p_0$ and $p_1$. Then $\dot{p}(t)=p_1-p_2$ belongs to $\mathcal{S}^{u}_\mathcal{H}.$ 
By Assumption 3, $$df(\mathcal{S}^{u}_\mathcal{V})\subset\mathcal{S}^{u}_\mathcal{H},$$
	which means that $\dot{q}(t)=df(p(t))\dot{p}(t)\in \mathcal{S}^{u}_\mathcal{H}$ and $q(t)=f(p(t))$ is a $\mu_h$-horizontal curve. Set $f(p(0))=q_0=(x_1,y_1)$ and 
	$ f(p(1))=q_1=(x_2,y_2).$ They belong to the boundary curves of $V$ which are the graphs of functions $x=v_1(y)$ and $x=v_2(y).$ Then  Lemma \eqref{4.8} gives
	$$  |x_1-x_2|\leq \frac{1}{1-\mu_v\mu_{h}}\big(\Vert v_1-v_2\Vert\big)=\frac{1}{1-\mu_v\mu_{h}} d(V).$$
Moreover, from Assumption 3, 
$$|\dot{q}(t)|\geq \frac{1}{\mu}|\dot{p}(t)|=\frac{1}{\mu} |p_1-p_0|.$$
Indeed,
$$ |p_1-p_0|\leq \mu \int_{0}^{1} |\dot{q}(t)| dt \leq |x_1-x_2|.$$	
Therefore, for  $\alpha_V=\frac{\mu}{1-\mu_v\mu_h}$,$$d({V}^j)\leq \alpha_V\,d(V).$$	
\end{proof}
\begin{figure}[h]
	\centering
	\includegraphics[scale=1.1]{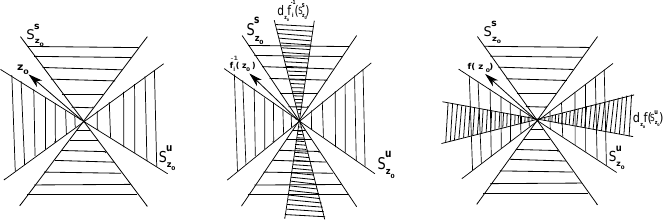}
	\caption{For $N = 2$, $f_i$ with $i=1,2$.}
	\label{fig:6}
\end{figure}

\section{Structural Stability of the $N$-to-1 horseshoe map}\label{sec:5}
As it is known the co-existence of dense hyperbolic periodic points with strong transversality of stable-unstable manifolds, 
for invertible hyperbolic dynamics is equivalent to the structural stability in the $C^1$ Whitney topology \cite{Robinson:1976}. 
It is believed that hyperbolic endomorphisms in this topology are not structurally stable (see \cite{Przytycki:1976}, \cite{Mane and Pugh:1975}). 

In \cite{Quandt:1989} and in \cite{Berger and Kocsard:2016} the authors give some conditions in which it implies the inverse limit structural stability. 

Let $X$ be a compact Riemannian manifold.  
The inverse limit space is defined as follows, $$X^f=\{\tilde{x}=(x_n)|f(x_{n})=x_{n+1},\,n\in\mathbb{Z}\}.$$
 One defines $\tilde{f}:X^f\to X^f$ being the shift homeomrphism which is called the natural extension map associated with $f$.
   There exists a natural projection $\pi: X^f\to X$ such that for any $\tilde{x}=(x_n)\in X^f$, $\pi(\tilde{x})=x_0=x$. 
   The forward orbit of a point over $f,$ the set $O^+(x)=\{f^n(x)|n\in \mathbb{Z}^+\}$ is unique, but with infinitely many, different pre-histories, 
   in which creates infinitely many points in $\pi^{-1}(x)\subset X^f.$ Let $d(.,.)$ represent the Riemannian metric
    on $X$. For $\tilde{x}=(x_n), \tilde{y}=(y_n)\in X^f$ the following $\tilde{d}$ defines a natural metric on $X^f$.

$$\tilde{d}(\tilde{x},\tilde{y}):=\sum_{n=-\infty}^{\infty}2^{-|n|}d(x_{n}, y_{n}).$$

Let $f$ be a $C^{2}$-and $\Lambda$ be an $f$-invariant closed subset of $X$. One defines 
$$\Lambda^{f}:=\{\tilde{x}=(x_{n})\in X^{f} :  x_{n}\in\Lambda, \,\,for\, all\,\, n\in \mathbb{Z}\}.$$

Then $\Lambda$ is called a \textit{Uniformly Hyperbolic Set}, if $\forall\tilde{x}\in\Lambda^{f},$ there exist real constants $C>0$, $0<\mu<1$ and for every integer $n, m\in \mathbb{Z}^{+}$ one has:
\begin{itemize}
	\item $T_{x_{n}}X= E^{s}(\tilde{x},n)\oplus E^{u}(\tilde{x},n)$,
	\item $Df(E^{s}(\tilde{x},n))= E^{s}(\tilde{f}(\tilde{x}),n)=E^{s}(\tilde{x},n+1),$\\
	$\Vert Df^{m}_{x_{n}}(v)\Vert\leq C\mu^{m}\Vert v\Vert,$ for $\,v\in E^{s}_{x_{n}}$,
	\item $Df(E^{u}(\tilde{x},n))= E^{u}(\tilde{f}(\tilde{x}),n)=E^{u}(\tilde(x),n+1),$\\
	$ \Vert Df^{m}_{x_{n}}(v)\Vert\geq [C\mu^{m}]^{-1}\Vert v\Vert,$ for $\,v\in E^{u}_{x_{n}}.$
\end{itemize}
We say that hyperbolic set $\Lambda$ is \textit{locally maximal} if it is a finite union of disjoint closed invariant subsets, each of which being transitive, i.e. having a dense orbit. A hyperbolic set $\Lambda$ is a \textit{basic set} if it is locally maximal. The following Definitions are from \cite{Berger:2018}.

\begin{definition}
An endomorphism satisfies axiom A if its non-wandering set is a finite union of basic sets.
\end{definition}

\begin{definition}
	A $C^r$ map $f:X\to X$ is structurally stable if every $C^r$-perturbation $f':X\to X $
	of the dynamics is conjugate to $f$, i.e. there exists a homeomorphism $h$ so that $h\circ f = f\circ h$.
\end{definition}

\begin{definition}
	The endomorphism $f:X\to X$ is said to be $C^r$-inverse limit stable if for every $C^r$-perturbation $f'$
	of $f$, there exists a homeomorphism $h$ from $X^f$ onto $X^f$ such that $\tilde{f'}\circ h=h\circ \tilde{f}$.
\end{definition}

Let $\Lambda$ be a hyperbolic invariant set. Then one can define the unstable manifold of every point $\tilde{x}=(x_n)\in\Lambda^f$ with
$$W^{u}(f,\tilde{x})=\{\tilde{y}\in X^f: \tilde{d}(x_n,y_n)\xrightarrow[]{n\to-\infty}0\}.$$
When $f$ satisfies axiom A, it is an actual sub-manifold embedded in $X^f$. Moreover, the projection $\pi(W^u (\tilde{x}; f))$ displays a differentiable structure.
 The stable manifolds do not depend on the branches  and are defined similar to the case of $C^r$-diffeomorphisms. In special, for any $x\in \Lambda$ and $\tilde{x}\in\pi^{-1}(x)$ and some small enough $\epsilon>0$, we have 
$$\pi(W^{s}_{\epsilon}(f,\tilde{x}))=W^{s}_{\epsilon}(f,x)=\{y\in X: d(f^n(x),f^n(y))<\epsilon,\,with\,n\to\infty\}.$$

\begin{definition}
An axiom A endomorphism $f$ satisfies the weak transversality condition if for
every $\tilde{x}\in\Omega^f$ ($\Omega^f$ represents the inverse limit set of the $\Omega$-limit set) and any $y\in\Omega_f$  the map $\pi(W^u (\tilde{x}; f))$ is transverse to $W^{s}_{\epsilon}(f,y)$.
\end{definition}

The following theorem is from \cite{Berger and Kocsard:2016}.

\begin{theorem}\label{Thm:5.1}
	If a $C^1$-endomorphisms of a compact manifold satisfies
	axiom A and the weak transversality condition, then it is inverse limit stable.
\end{theorem}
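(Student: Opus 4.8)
The plan is to carry the classical Robbin--Robinson proof of structural stability for Axiom~A diffeomorphisms under strong transversality over to the inverse limit $X^f$, on which $\tilde f$ is a \emph{homeomorphism}: the hyperbolic constants on $\Lambda^f$ are uniform and $\tilde f$ is invertible, so every step that uses only hyperbolicity (persistence of hyperbolic sets, shadowing, local structural stability of the basic sets, the inclination lemma) transfers essentially verbatim, and the weak transversality condition is exactly what replaces strong transversality in the globalisation step.

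First I would fix a $C^1$-neighbourhood $\mathcal U$ of $f$ small enough that for every $f'\in\mathcal U$ the non-wandering set $\Omega_{f'}$ still decomposes spectrally into finitely many basic sets, each a hyperbolic continuation of the corresponding basic piece $\Lambda_j$ of $\Omega_f$. On the level of inverse limits this provides, for each $j$, a hyperbolic set $\Lambda_j^{f'}\subset X^{f'}$ for $\tilde{f'}$ together with a homeomorphism $h_j\colon\Lambda_j^f\to\Lambda_j^{f'}$ satisfying $h_j\circ\tilde f=\tilde{f'}\circ h_j$ and $C^0$-close to the canonical identification of the two pieces. This is just persistence of hyperbolic sets and local structural stability applied to the homeomorphisms $\tilde f,\tilde{f'}$; the point worth checking is only that the relevant estimates are uniform over $\mathcal U$.

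Next I would globalise. Axiom~A together with weak transversality gives the no-cycle property, hence a filtration $\emptyset=M_0\subset M_1\subset\cdots\subset M_r=X$ adapted to an ordering of the basic sets, and the same filtration serves every $f'\in\mathcal U$. Given $\tilde x\in X^f$, its $\alpha$-limit set lies in some $\Lambda_a^f$ and its $\omega$-limit set in some $\Lambda_b^f$; invariant manifold theory puts $\tilde x$ on a unique unstable leaf $W^u(f,\tilde y)$ with $\tilde y\in\Lambda_a^f$, and puts $x_0=\pi(\tilde x)$ on a unique stable manifold $W^s(f,z)$ with $z\in\Lambda_b$. I would then define $h(\tilde x)$ to be the unique point of $X^{f'}$ lying on the continuation through $h_a(\tilde y)$ of that unstable leaf and projecting onto the continuation through $\pi(h_b(z))$ of that stable manifold: weak transversality, i.e.\ transversality of $\pi(W^u(\tilde x;f))$ to $W^s_\epsilon(f,z)$ inside $X$, guarantees that these perturbed objects meet in exactly one point, depending continuously on $\tilde x$ and on $f'$. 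One then checks in the usual way --- using the inclination lemma on $X^f$ for uniformity near the basic sets --- that $h$ is a well-defined bijection, continuous, and satisfies $h\circ\tilde f=\tilde{f'}\circ h$, so it is the conjugacy required by the definition of inverse limit stability.

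The main obstacle is the mismatch between the two ambient spaces involved: stable manifolds are intrinsic to the base $X$ and are insensitive to the chosen prehistory, whereas unstable manifolds are genuine embedded submanifolds only inside the inverse limit $X^f$. The weak transversality hypothesis is formulated precisely to bridge this gap, but turning the bridging into a rigorous gluing --- and in particular showing that $C^1$-closeness of $f$ and $f'$ on $X$ forces $C^1$-closeness of the corresponding unstable leaves inside $X^f$ and $X^{f'}$, uniformly over all basic sets --- is the delicate part and requires a careful graph-transform argument on the inverse limit. With that uniformity established, the remainder follows the classical template.
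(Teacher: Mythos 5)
The first thing to note is that the paper does not prove this statement at all: it is explicitly imported from the literature (``The following theorem is from [Berger \& Kocsard 2016]''), so there is no in-paper argument to compare yours against. Your proposal therefore has to stand on its own, and judged that way it reproduces the correct classical \emph{outline} (spectral decomposition, persistence and local conjugacy of the hyperbolic basic pieces on the inverse limit, a filtration from the no-cycle property, then globalisation using transversality), but the globalisation step --- the only genuinely hard part --- contains a gap that I do not think can be repaired in the form you state it.

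Concretely, you define $h(\tilde x)$ as ``the unique point of $X^{f'}$ lying on the continuation of the unstable leaf through $h_a(\tilde y)$ and projecting onto the continuation of the stable manifold of $\pi(h_b(z))$.'' This is not well defined. A stable manifold and an unstable leaf that both pass through $\tilde x$ will in general meet in many points --- a single transverse homoclinic intersection already forces infinitely many, by the inclination lemma --- so transversality gives local uniqueness of an intersection point, not global uniqueness, and nothing in the sketch pins down \emph{which} intersection point of the perturbed leaves is meant, nor why the resulting map is injective, continuous, or equivariant. Moreover, the phrase ``transverse inside $X^{f'}$'' is not meaningful, since the inverse limit is not a manifold; the weak transversality hypothesis lives downstairs in $X$ via $\pi$, and you do not explain how uniqueness of an intersection in $X$ lifts to a choice of prehistory in $X^{f'}$. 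This is exactly why the actual proofs (Robbin and Robinson for diffeomorphisms, Quandt and Berger--Kocsard in the inverse-limit setting) do not glue leaf intersections geometrically: they solve the conjugacy equation $h\circ\tilde f=\tilde f'\circ h$ by a fixed-point or implicit-function-theorem argument in a space of candidate pseudo-conjugacies, using shadowing and the transversality condition to show the linearised operator is surjective. Your closing paragraph correctly identifies the stable/unstable asymmetry as the delicate point, but identifying the obstacle is not the same as overcoming it; as written, the construction of $h$ would fail.
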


In what follows we show that the $N$-to-1 horseshoe map $f_{|_{\Lambda}}$ is inverse limit structurally stable.

\begin{theorem}
Let $f_{|_{\Lambda}}:\Lambda\to\Lambda$ be an $N$-to-1 horseshoe map. Then $f_{|_{\Lambda}}$ is inverse limit stable.
\end{theorem}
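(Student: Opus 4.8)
The plan is to verify the two hypotheses of Theorem~\ref{Thm:5.1} for the $N$-to-1 horseshoe map $f_{|_{\Lambda}}$, namely that $f_{|_{\Lambda}}$ satisfies axiom~A and the weak transversality condition. First I would establish hyperbolicity of $\Lambda$. The construction in Section~\ref{sec:4}, together with the sector-bundle formulation of Assumption~3, already supplies invariant stable and unstable cone fields over $\mathcal{H}\cap\mathcal{V}$ with uniform expansion/contraction rates governed by the constant $\mu<1-\mu_h\mu_v$; concretely, for the linear model one has $\alpha=2N+\epsilon$ and $\beta=1/\alpha$, so the derivative of each branch $f_i$ is hyperbolic with the splitting $E^u$ along the vertical direction and $E^s$ along the horizontal direction. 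I would check that this cone-field data verifies the definition of a uniformly hyperbolic set along every pre-history $\tilde{x}\in\Lambda^f$ — which is the appropriate notion for endomorphisms — using that the stable subspaces are branch-independent while the unstable subspaces are carried along a chosen pre-history. Since $\Lambda$ is the maximal invariant set in $Q$ and, by Theorems~\ref{Thm:3}, \ref{thm:2}, it is topologically conjugate to a transitive zip shift map, $\Lambda$ is a basic set; hence the non-wandering set (which equals $\Lambda$) is a single basic set, giving axiom~A.

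Next I would verify weak transversality. By the conjugacy $\phi$ of Theorem~\ref{Thm:3}, every point of $\Lambda$ is the intersection of a unique vertical curve $V^{s_0s_1\cdots}$ with a unique horizontal curve $H_{\cdots s_{-1}}$, and by construction these curves are exactly the local unstable and stable manifolds: the projected unstable manifold $\pi(W^u(f,\tilde x))$ is (locally) a vertical $\mu_v$-Lipschitz curve and $W^s_\epsilon(f,y)$ is (locally) a horizontal $\mu_h$-Lipschitz curve. The key estimate is Lemma~\ref{v-h intersection} (via the Contraction Mapping Theorem with $0<\mu_h\mu_v<1$), which gives that any such vertical and horizontal curve meet in exactly one point; since a vertical $\mu_v$-Lipschitz curve and a horizontal $\mu_h$-Lipschitz curve have tangent directions lying in the disjoint cones $\mathcal S^u$ and $\mathcal S^s$, the intersection is automatically transverse. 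Thus for every $\tilde x\in\Omega^f=\Lambda^f$ and every $y\in\Omega_f=\Lambda$ the map $\pi(W^u(\tilde x;f))$ is transverse to $W^s_\epsilon(f,y)$, which is exactly the weak transversality condition.

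Having checked axiom~A and weak transversality, I would invoke Theorem~\ref{Thm:5.1} to conclude that $f_{|_{\Lambda}}$ is $C^1$-inverse limit stable, completing the proof. I expect the main obstacle to be the bookkeeping in the hyperbolicity step: one must phrase the splitting and the expansion/contraction estimates \emph{along pre-histories} in $\Lambda^f$ rather than pointwise on $\Lambda$ (because the $N$-to-1 nature means $E^u$ depends on the chosen backward branch), and one must confirm that the cone-field inclusions and rate inequalities of Assumption~3 translate into the $C,\mu$ form of the uniform-hyperbolicity definition used here — the remaining two verifications (basic set from the zip-shift conjugacy, transversality from the disjoint-cone geometry) are then essentially immediate from results already proved in the excerpt.
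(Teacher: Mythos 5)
Your proposal follows essentially the same route as the paper: verify axiom A (via local maximality and transitivity of $\Lambda$ from the zip shift conjugacy) and the weak transversality condition, then invoke the Berger--Kocsard theorem (Theorem~\ref{Thm:5.1}) to conclude inverse limit stability. In fact your write-up is more detailed than the paper's, which simply asserts hyperbolicity and transversality ``by construction,'' whereas you correctly trace them back to the sector bundles of Assumption~3 and the disjoint-cone/Lipschitz-curve geometry of Lemma~\ref{v-h intersection}, including the pre-history-dependence of $E^u$.
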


\begin{proof}
Note that the $N$-to-1 horseshoe set constructed in Section \ref{Sec:2} is a closed invariant subset of  $Q\subset D$ and $f_{|_{\Lambda}}$  is a transitive map by Theorems \ref{thm:2} and \ref{Thm:3}. 
Therefore, $\Lambda$ is a locally maximal basic set. 
As topological conjugacy preserves the structure of the orbits, it is easy to verify that $\Lambda$ is a non-wandering set. 
Indeed, $f_{|_{\Lambda}}:\Lambda\to\Lambda$ is an example of an Axiom A endomorphism. 
By construction, an $N$-to-1 Smale horseshoe set not only is hyperbolic, but also has (weak) strong transversality condition. 
By that, we mean that for any $\tilde{x},\tilde{y}\in \Lambda^f$ and all $n\geq 0$ we have $f^n_{|_{\pi(W^{u}(f,\tilde{x}))}}\pitchfork \pi(W^s_{\epsilon}(f,\tilde{y}))\neq \emptyset$. So by Theorem \ref{Thm:5.1}, the map  $f_{|_{\Lambda}}$ is inverse limit structurally stable.
\end{proof}

The construction of the $N$-to-1 horseshoe map which is presented in Sections \ref{sec:3} and \ref{sec:4}  performs both conditions of strong transversality and density of hyperbolic periodic points. 
It provides not only the inverse limit structural stability, but also seems promising to an $N$-to-1 $C^1$-perturbation. Let  $LD^1(X)$ be the space of all $C^1$ local diffeomorphisms defined on $D$, and $U_{\epsilon}(f)$ represent an $\epsilon$-neighborhood of $f$ in $C^1$-Withney topology. We say that the $N$-to-1 map $f$ has $N$-to-1 $C^1$-structural stability if there exists some $\epsilon>0$ such that for all $g\in U_{\epsilon}^N(f)$, (i.e. the restriction of $U_{\epsilon}(f)$ to all $N$-to-1 maps) there exists a homeomorphism $h$ such that $h\circ f = f\circ h$. 


\begin{theorem}
	Let $f:D\to D$ be an $N$-to-1 local diffeomorphism that satisfies the Assumptions 1 and 3. Then $f_{|_{\Lambda}}$ is $N$-to-1 $C^1$-structurally stable in the $C^1$-Whitney topology.
\end{theorem}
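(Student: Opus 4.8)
The plan is to derive the result from Theorems~\ref{Thm:4} and~\ref{Thm:3} together with an openness argument for the hypotheses. First, since $f$ satisfies Assumptions~1 and~3 with $0<\mu<1-\mu_h\mu_v$, Theorem~\ref{Thm:4} gives that $f$ also satisfies Assumption~2; hence by Theorem~\ref{Thm:3} there is an invariant Cantor set $\Lambda$ and a homeomorphism $\phi_f:\Lambda\to\Sigma$ with $\phi_f\circ f_{|_{\Lambda}}=\sigma_{\tau}\circ\phi_f$, where $\sigma_{\tau}$ is the zip shift over $S=\{1,\dots,N,1',\dots,N'\}$, $Z=\{a,b\}$ and $\tau(i)=a$, $\tau(i')=b$. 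Density of hyperbolic periodic points in $\Lambda$ is then inherited from Theorem~\ref{thm:1} via $\phi_f$, and transitivity from Theorem~\ref{thm:2}; the strong transversality observed in the construction fits the classical $C^1$ picture.

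The heart of the argument is that Assumptions~1 and~3 are \emph{open} conditions in the $C^1$-Whitney topology, restricted to the space of $N$-to-1 local diffeomorphisms. Assumption~1 describes the geometric configuration of the finitely many strips $H_a,H_b,V^1,\dots,V^N,V^{1'},\dots,V^{N'}$ and the way each branch $f_i$ sends vertical boundaries to vertical boundaries and horizontal to horizontal while stretching fully across; this transversal-crossing configuration persists for every $g\in U_\epsilon^N(f)$ with $\epsilon$ small, the perturbed strips remaining genuine $\mu_h'$- and $\mu_v'$-horizontal/vertical strips for constants $\mu_h',\mu_v'\in(0,1)$ close to $\mu_h,\mu_v$. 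Assumption~3 is phrased purely in terms of the derivatives $df$ and $df_k^{-1}$: the strict cone inclusions $df(\mathcal{S}^{u}_{\mathcal{V}})\subset \mathcal{S}^{u}_{\mathcal{H}}$, $df_k^{-1}(\mathcal{S}^{s}_{\mathcal{H}})\subset \mathcal{S}^{s}_{\mathcal{V}}$ and the expansion/contraction estimates with rate $1/\mu$ involve only finitely many compact sets and strict inequalities, so they survive for $g$ with a possibly slightly larger constant $\mu'$, and after shrinking $\epsilon$ one still has $0<\mu'<1-\mu_h'\mu_v'$. Crucially, requiring $g$ to remain $N$-to-1 forces the number of branches and the combinatorial labelling of the strips to be unchanged, so the alphabet sets $S,Z$, the surjection $\tau$, and therefore the zip shift space $\Sigma$ and map $\sigma_{\tau}$ are literally the same.

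Applying Theorem~\ref{Thm:4} and then Theorem~\ref{Thm:3} to $g$ produces an invariant Cantor set $\Lambda_g$ and a homeomorphism $\phi_g:\Lambda_g\to\Sigma$ with $\phi_g\circ g_{|_{\Lambda_g}}=\sigma_{\tau}\circ\phi_g$. Then $h:=\phi_g^{-1}\circ\phi_f:\Lambda\to\Lambda_g$ is a homeomorphism and $h\circ f_{|_{\Lambda}}=\phi_g^{-1}\circ\sigma_{\tau}\circ\phi_f=g_{|_{\Lambda_g}}\circ h$, which is exactly the conjugacy demanded by the definition of $N$-to-1 $C^1$-structural stability (where the relation ``$h\circ f=f\circ h$'' in that definition is meant as $h\circ f=g\circ h$). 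This completes the proof.

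I expect the main obstacle to be making the openness of Assumption~1 fully precise: that the local branches $f_i$ together with their domains $Q_i$ depend continuously on the perturbation, that the perturbed images still cross the horizontal strips cleanly with boundary curves mapped correctly, that the Lipschitz constants and the gap $1-\mu_h\mu_v$ degrade only slightly, and that ``remaining $N$-to-1 under a small $C^1$-perturbation'' genuinely preserves the branch combinatorics so that the target object is the identical zip shift $\sigma_{\tau}$ rather than merely a conjugate one. By comparison, the persistence of the derivative cone estimates of Assumption~3 is routine.
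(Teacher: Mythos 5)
Your proposal is correct and follows essentially the same route as the paper: show that Assumptions 1 and 3 persist under a small $N$-to-1 $C^1$-perturbation $g$ (openness of the strip configuration and of the strict cone conditions), apply Theorem \ref{Thm:4} and then Theorem \ref{Thm:3} to both $f$ and $g$, and compose the two conjugacies with the (same, or same-cardinality, hence conjugate) zip shift to obtain $h=\phi_g^{-1}\circ\phi_f$. The only cosmetic difference is that the paper phrases the last step as ``zip shifts with the same cardinalities of $S$ and $Z$ are topologically conjugate'' rather than insisting the alphabets are literally identical, and your reading of the definition's ``$h\circ f=f\circ h$'' as ``$h\circ f=g\circ h$'' matches the intended meaning.
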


\begin{proof}
	
Let $V^1,\dots,V^N$ and $V^{1'},\dots,V^{N'}$ represent the vertical strips of  map $f:D\to D$ that satisfies Assumptions 1 and 3. Set $g(X)=f(x,y)+\epsilon(x,y)$  as an small enough $N$-to-1 perturbation of $f$ such that $g(Q)\cap Q\subset D$ is a horseshoe with two disjoint horizontal strips (for a 2-to-1 perturbation see figure \ref{fig:7}). Without any loss of generality we denote these new horizontal strips by $H_a$ and $H_b$. 
Note that by definition, an $N$-to-1 perturbation of $f$ is an $N$-to-1 local diffeomorphism $C^1$-close to it such that if $f_1,\dots,f_N$ are the local dynamics (see Definition \ref{Def:p-t-1}) of $f$ then there exists $N$ diffeomorphisms $g_1,\dots, g_N$ which are local dynamics associated with $g$.
 This implies that the pre-images of $H_a$ and $H_b$ under $g$ contains $2N$ vertical strips satisfying  Assumption 1 (i.e.
the vertical strips $V^1,\dots,V^N$ and $V^{1'},\dots,V^{N'}$ are homeomorphically mapped to new horizontal strips $H_a$ and $H_b$ and the horizontal and vertical boundaries are preserved).

An $N$-to-1 $C^1$ perturbation of $f$ and consequently the $C^1$ perturbation of diffeomorphisms $f_i,i=1,\dots ,N$ preserves the cone properties. 
Thus, it is enough to choose $0<\epsilon<e$ (where $e>0$ is the expansivity constant of $f$, see Proposition
 \ref{prop:2})  small enough such that for $\mu_g=\mu_{h}^g\mu_{v}^g$ the perturbed map $g,$ satisfies  Assumption 3.
  Moreover, using Theorem \ref{Thm:4}, 
 Assumption 2 is satisfied as well. Thereupon, by Theorem \ref{Thm:3}
  the restriction of $g$ to the invariant set $\Lambda_g$ is topologically
   conjugate to a zip shift map on two sets of alphabets $S_g$ and $Z_g$ with
    $\#(S_g)=2N$ and $\# (Z_g) = 2$.
     Since $N$-to-1 full zip shift maps with the same
      cardinalities for $S$ and $Z,$ are topologically conjugate, so, $f_{|_{\Lambda}}$ is  $C^1$ structurally stable. 

%
\end{proof} 

\begin{figure}[h]
	\centering
	\includegraphics[scale=1]{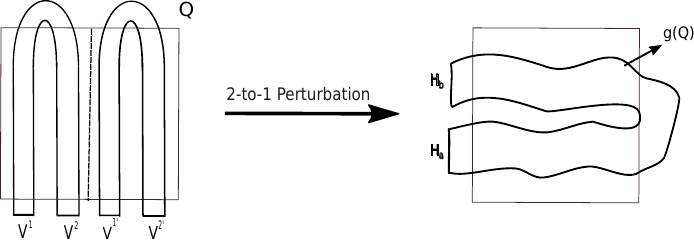}
	\caption{2-to-1 perturbation of $f$ for $N= 2$.}
	\label{fig:7}
\end{figure}

The $N$-to-1 horseshoe map is one of the first examples of a structurally stable Weak Axiom A map in the sense of \cite{Przytycki:1976}
 for which the restriction of the map to the attractor set is not injective. 
 In \cite{Mehdipour:2024} the natural zip shift map which is a modified natural extension of zip shift  is introduced. 
 In contrary with natural extension of an endomorphism which is a homeomorphism semi-conjugate to the original map, 
 the natural zip shift map is a local homeomorphism which provides the topological conjugacy with the original endomorphism. 
 We aim to use this  to re-study the structural stability of hyperbolic endomorphisms. It is worth mentioning that there are recent works 
 \cite{Kurenkov:2017}, \cite{Grines and Zhuzhoma and Kurenkov:2018} and \cite{Grines and Zhuzhoma and Kurenkov:2021}, 
 where the authors study the Derived from Anosov endomorphisms of the two-dimensional torus. Their achievements are consistent with our results.
\bigskip
\\
\textbf{Conflict of Interest}\\
	The authors declare no conflict of interest.

\end{document}